\newtheorem{theorem}{Theorem}[section]
\newtheorem{prop}[theorem]{Proposition}
\newtheorem{lemma}[theorem]{Lemma}
\newtheorem{prop-def}{Proposition-Definition}[section]
\newtheorem{coro-def}{Corollary-Definition}[section]
\theoremstyle{definition}
\newtheorem{defn}[theorem]{Definition}
\newtheorem{remark}[theorem]{Remark}
\newtheorem{exam}[theorem]{Example}
\newcommand{\nc}{\newcommand}
\nc{\tred}[1]{\textcolor{red}{#1}}
\nc{\tblue}[1]{\textcolor{blue}{#1}}
\nc{\tgreen}[1]{\textcolor{green}{#1}}
\nc{\tpurple}[1]{\textcolor{purple}{#1}}
\nc{\btred}[1]{\textcolor{red}{\bf #1}}
\nc{\btblue}[1]{\textcolor{blue}{\bf #1}}
\nc{\btgreen}[1]{\textcolor{green}{\bf #1}}
\nc{\btpurple}[1]{\textcolor{purple}{\bf #1}}
\nc{\NN}{{\mathbb N}}
\nc{\ncsha}{{\mbox{\cyr X}^{\mathrm NC}}} \nc{\ncshao}{{\mbox{\cyr
X}^{\mathrm NC}_0}}
\newcommand{\efootnote}[1]{}
\renewcommand{\textbf}[1]{}
\newcommand{\delete}[1]{}
\nc{\mlabel}[1]{\label{#1}}
\nc{\mcite}[1]{\cite{#1}}
\nc{\mref}[1]{\ref{#1}}
\nc{\meqref}[1]{\eqref{#1}}
\nc{\mbibitem}[1]{\bibitem{#1}}
\nc{\mlabel}[1]{\label{#1}{\hfill \hspace{1cm}{\bf{{\ }\hfill(#1)}}}}
\nc{\mcite}[1]{\cite{#1}{{\bf{{\ }(#1)}}}}
\nc{\mref}[1]{\ref{#1}{{\bf{{\ }(#1)}}}}
\nc{\meqref}[1]{\eqref{#1}{{\bf{{\ }(#1)}}}}
\nc{\mbibitem}[1]{\bibitem[\bf #1]{#1}}
\nc{\opa}{\ast} \nc{\opb}{\odot} \nc{\op}{\bullet} \nc{\pa}{\frakL}
\nc{\arr}{\rightarrow} \nc{\lu}[1]{(#1)} \nc{\mult}{\mrm{mult}}
\nc{\diff}{\mathfrak{Diff}}
\nc{\opc}{\sharp}\nc{\opd}{\natural}
\nc{\ope}{\circ}
\nc{\dpt}{\mathrm{d}}
\nc{\hck}{H_{RT}}
\nc{\vdf}{\calf}
\nc{\ldf}{\calf_\ell}
\nc{\hlf}{H_\ell}
\nc{\onek}{\mathbf{1}_\bfk}
\nc{\tforall}{\, \text{ for }\, }
\nc{\qforall}{\quad \text{for all }}
\nc{\mrba}{MRBA\xspace}
\nc{\Mrba}{MRBA\xspace}
\nc{\mrbas}{MRBAs\xspace}
\nc{\Mrbas}{MRBAs\xspace}
\nc{\match}{matching\xspace}
\nc{\Match}{Matching\xspace}
\nc{\Mza}{Matching Zinbiel algebra\xspace}
\nc{\Mzas}{Matching Zinbiel algebras\xspace}
\nc{\mza}{matching Zinbiel algebra\xspace}
\nc{\mzas}{matching Zinbiel algebras\xspace}
\nc{\za}{Zinbiel algebra\xspace}
\nc{\paybe}{polarized associative Yang-Baxter equation\xspace}
\nc{\Paybe}{Polarized associative Yang-Baxter equation\xspace}
\nc{\cpaybe}{PAYBE}
\nc{\diam}{alternating\xspace}
\nc{\Diam}{Alternating\xspace}
\nc{\cdiam}{canonical alternating\xspace}
\nc{\Cdiam}{Canonical alternating\xspace}
\nc{\AW}{\mathcal{A}}
\nc{\rba}{Rota-Baxter algebra\xspace}
\nc{\rbas}{Rota-Baxter algebras\xspace}
\nc{\ari}{\mathrm{ar}}
\nc{\lef}{\mathrm{lef}}
\nc{\Sh}{\mathrm{ST}}
\nc{\Cr}{\mathrm{Cr}}
\nc{\st}{{Schr\"oder tree}\xspace}
\nc{\sts}{{Schr\"oder trees}\xspace}
\nc{\vertset}{\Omega} 
\nc{\assop}{\quad \begin{picture}(5,5)(0,0)
\line(-1,1){10}
\put(-2.2,-2.2){$\bullet$}
\line(0,-1){10}\line(1,1){10}
\end{picture} \quad \smallskip}
\nc{\operator}{\begin{picture}(5,5)(0,0)
\line(0,-1){6}
\put(-2.6,-1.8){$\bullet$}
\line(0,1){9}
\end{picture}}
\nc{\idx}{\begin{picture}(6,6)(-3,-3)
\put(0,0){\line(0,1){6}}
\put(0,0){\line(0,-1){6}}
\end{picture}}
\nc{\pb}{{\mathrm{pb}}}
\nc{\Lf}{{\mathrm{Lf}}}
\nc{\lft}{{left tree}\xspace}
\nc{\lfts}{{left trees}\xspace}
\nc{\fat}{{fundamental averaging tree}\xspace}
\nc{\fats}{{fundamental averaging trees}\xspace}
\nc{\avt}{\mathrm{Avt}}
\nc{\rass}{{\mathit{RAss}}}
\nc{\aass}{{\mathit{AAss}}}
\nc{\twovec}[2]{\left[#1\atop #2\right]}
\nc{\vin}{{\mathrm Vin}}    
\nc{\lin}{{\mathrm Lin}}    
\nc{\inv}{\mathrm{I}n}
\nc{\gensp}{V} 
\nc{\genbas}{\mathcal{V}} 
\nc{\bvp}{V_P}     
\nc{\gop}{{\,\omega\,}}     
\nc{\bin}[2]{ (_{\stackrel{\scs{#1}}{\scs{#2}}})}  
\nc{\binc}[2]{ \left (\!\! \begin{array}{c} \scs{#1}\\
    \scs{#2} \end{array}\!\! \right )}  
\nc{\bincc}[2]{  \left ( {\scs{#1} \atop
    \vspace{-1cm}\scs{#2}} \right )}  
\nc{\bs}{\bar{S}} \nc{\cosum}{\sqsubset} \nc{\la}{\longrightarrow}
\nc{\rar}{\rightarrow} \nc{\dar}{\downarrow} \nc{\dprod}{**}
\nc{\dap}[1]{\downarrow \rlap{$\scriptstyle{#1}$}}
\nc{\md}{\mathrm{dth}} \nc{\uap}[1]{\uparrow
\rlap{$\scriptstyle{#1}$}} \nc{\defeq}{\stackrel{\rm def}{=}}
\nc{\disp}[1]{\displaystyle{#1}} \nc{\dotcup}{\
\displaystyle{\bigcup^\bullet}\ } \nc{\gzeta}{\bar{\zeta}}
\nc{\hcm}{\ \hat{,}\ } \nc{\hts}{\hat{\otimes}}
\nc{\barot}{{\otimes}} \nc{\free}[1]{\bar{#1}}
\nc{\uni}[1]{\tilde{#1}} \nc{\hcirc}{\hat{\circ}} \nc{\lleft}{[}
\nc{\lright}{]} \nc{\lc}{\lfloor} \nc{\rc}{\rfloor}
\nc{\curlyl}{\left \{ \begin{array}{c} {} \\ {} \end{array}
    \right .  \!\!\!\!\!\!\!}
\nc{\curlyr}{ \!\!\!\!\!\!\!
    \left . \begin{array}{c} {} \\ {} \end{array}
    \right \} }
\nc{\longmid}{\left | \begin{array}{c} {} \\ {} \end{array}
    \right . \!\!\!\!\!\!\!}
\nc{\onetree}{\bullet} \nc{\ora}[1]{\stackrel{#1}{\rar}}
\nc{\ola}[1]{\stackrel{#1}{\la}}
\nc{\ot}{\otimes} \nc{\mot}{{{\boxtimes\,}}}
\nc{\otm}{\overline{\boxtimes}} \nc{\sprod}{\bullet}
\nc{\scs}[1]{\scriptstyle{#1}} \nc{\mrm}[1]{{\rm #1}}
\nc{\margin}[1]{\marginpar{\rm #1}}   
\nc{\dirlim}{\displaystyle{\lim_{\longrightarrow}}\,}
\nc{\invlim}{\displaystyle{\lim_{\longleftarrow}}\,}
\nc{\mvp}{\vspace{0.3cm}} \nc{\tk}{^{(k)}} \nc{\tp}{^\prime}
\nc{\ttp}{^{\prime\prime}} \nc{\svp}{\vspace{2cm}}
\nc{\vp}{\vspace{8cm}} \nc{\proofbegin}{\noindent{\bf Proof: }}
\nc{\proofend}{$\blacksquare$ \vspace{0.3cm}}
\nc{\modg}[1]{\!<\!\!{#1}\!\!>}
\nc{\intg}[1]{F_C(#1)} \nc{\lmodg}{\!
<\!\!} \nc{\rmodg}{\!\!>\!}
\nc{\cpi}{\widehat{\Pi}}
\nc{\sha}{{\mbox{\cyr X}}}  
\newfont{\scyr}{wncyr10 scaled 550}
\nc{\ssha}{\mbox{\bf \scyr X}}
\nc{\shap}{{\mbox{\cyrs X}}} 
\nc{\shpr}{\diamond}    
\nc{\shp}{\ast} \nc{\shplus}{\shpr^+}
\nc{\shprc}{\shpr_c}    
\nc{\msh}{\ast} \nc{\zprod}{m_0} \nc{\oprod}{m_1}
\nc{\vep}{\epsilon} \nc{\labs}{\mid\!} \nc{\rabs}{\!\mid}
\nc{\sqmon}[1]{\langle #1\rangle}
\nc{\mmbox}[1]{\mbox{\ #1\ }} \nc{\dep}{\mrm{dep}} \nc{\fp}{\mrm{FP}}
\nc{\rchar}{\mrm{char}} \nc{\End}{\mrm{End}} \nc{\Fil}{\mrm{Fil}}
\nc{\Mor}{Mor\xspace} \nc{\gmzvs}{gMZV\xspace}
\nc{\gmzv}{gMZV\xspace} \nc{\mzv}{MZV\xspace}
\nc{\mzvs}{MZVs\xspace} \nc{\Hom}{\mrm{Hom}} \nc{\id}{\mrm{id}}
\nc{\im}{\mrm{im}} \nc{\incl}{\mrm{incl}} \nc{\map}{\mrm{Map}}
\nc{\mchar}{\rm char} \nc{\nz}{\rm NZ} \nc{\supp}{\mathrm Supp}
\nc{\Alg}{\mathbf{Alg}} \nc{\Bax}{\mathbf{Bax}} \nc{\bff}{\mathbf f}
\nc{\bfk}{{\bf k}} \nc{\bfone}{{\bf 1}} \nc{\bfx}{\mathbf x}
\nc{\bfy}{\mathbf y}
\nc{\base}[1]{\bfone^{\otimes ({#1}+1)}} 
\nc{\Cat}{\mathbf{Cat}}
\nc{\detail}{\marginpar{\bf More detail}
    \noindent{\bf Need more detail!}
    \svp}
\nc{\Int}{\mathbf{Int}} \nc{\Mon}{\mathbf{Mon}}
\nc{\rbtm}{{shuffle }} \nc{\rbto}{{Rota-Baxter }}
\nc{\remarks}{\noindent{\bf Remarks: }} \nc{\Rings}{\mathbf{Rings}}
\nc{\Sets}{\mathbf{Sets}} \nc{\wtot}{\widetilde{\odot}}
\nc{\wast}{\widetilde{\ast}} \nc{\bodot}{\bar{\odot}}
\nc{\bast}{\bar{\ast}} \nc{\hodot}[1]{\odot^{#1}}
\nc{\hast}[1]{\ast^{#1}} \nc{\mal}{\mathcal{O}}
\nc{\tet}{\tilde{\ast}} \nc{\teot}{\tilde{\odot}}
\nc{\oex}{\overline{x}} \nc{\oey}{\overline{y}}
\nc{\oez}{\overline{z}} \nc{\oef}{\overline{f}}
\nc{\oea}{\overline{a}} \nc{\oeb}{\overline{b}}
\nc{\weast}[1]{\widetilde{\ast}^{#1}}
\nc{\weodot}[1]{\widetilde{\odot}^{#1}} \nc{\hstar}[1]{\star^{#1}}
\nc{\lae}{\langle} \nc{\rae}{\rangle}
\nc{\lf}{\lfloor}
\nc{\rf}{\rfloor}
\nc{\QQ}{{\mathbb Q}}
\nc{\RR}{{\mathbb R}} \nc{\ZZ}{{\mathbb Z}}
\nc{\cala}{{\mathcal A}} \nc{\calb}{{\mathcal B}}
\nc{\calc}{{\mathcal C}}
\nc{\cald}{{\mathcal D}} \nc{\cale}{{\mathcal E}}
\nc{\calf}{{\mathcal F}} \nc{\calg}{{\mathcal G}}
\nc{\calh}{{\mathcal H}} \nc{\cali}{{\mathcal I}}
\nc{\call}{{\mathcal L}} \nc{\calm}{{\mathcal M}}
\nc{\caln}{{\mathcal N}} \nc{\calo}{{\mathcal O}}
\nc{\calp}{{\mathcal P}} \nc{\calr}{{\mathcal R}}
\nc{\cals}{{\mathcal S}} \nc{\calt}{{\mathcal T}}
\nc{\calu}{{\mathcal U}} \nc{\calw}{{\mathcal W}} \nc{\calk}{{\mathcal K}}
\nc{\calx}{{\mathcal X}} \nc{\CA}{\mathcal{A}}
\nc{\fraka}{{\mathfrak a}} \nc{\frakA}{{\mathfrak A}}
\nc{\frakb}{{\mathfrak b}} \nc{\frakB}{{\mathfrak B}}
\nc{\frakc}{{\mathfrak c}}
\nc{\frakD}{{\mathfrak D}} \nc{\frakF}{\mathfrak{F}}
\nc{\frakf}{{\mathfrak f}} \nc{\frakg}{{\mathfrak g}}
\nc{\frakH}{{\mathfrak H}} \nc{\frakL}{{\mathfrak L}}
\nc{\frakM}{{\mathfrak S}} \nc{\bfrakM}{\overline{\frakM}}
\nc{\frakm}{{\mathfrak m}} \nc{\frakP}{{\mathfrak P}}
\nc{\frakN}{{\mathfrak N}} \nc{\frakp}{{\mathfrak p}}
\nc{\frakS}{{\mathfrak S}} \nc{\frakT}{\mathfrak{T}}
\nc{\frakX}{{\mathfrak X}}
\nc{\BS}{\mathbb{S
}}
\font\cyr=wncyr10 \font\cyrs=wncyr7
\nc{\li}[1]{\textcolor{red}{#1}}
\nc{\lir}[1]{\textcolor{red}{Li:#1}}
\nc{\yi}[1]{\textcolor{blue}{Yi: #1}}
\nc{\xing}[1]{\textcolor{purple}{Xing:#1}}
\nc{\revise}[1]{\textcolor{red}{#1}}
\nc{\ID}{{\rm I}}\nc{\lbar}[1]{\overline{#1}}\nc{\bre}{{\rm bre}}
\nc{\sd}{\cals}\nc{\rb}{\rm RB}\nc{\A}{\rm A}\nc{\LL}{\rm L}\nc{\tx}{\tilde{X}}
\nc{\col}{\Delta_{RT}}\nc{\mul}{m_{RT}}\nc{\ul}{u_{RT}}\nc{\epl}{\varepsilon_{RT}}
\nc{\hl}{H_{RT}}\nc{\arro}[1]{#1}\nc{\px}{P_{\tx}}\nc{\pw}{P_{\mathfrak{w}}}\nc{\pl}{B_\omega^+}
\nc{\pp}{\pl}\nc{\ppp}[1]{B^+(#1)}\nc{\dw}{\diamond_{\mathfrak{w}}}\nc{\dl}{\diamond_{\rm \ell}}
\nc{\ncshaw}{\sha^{{\rm NC}}_{\Omega}}\nc{\ncshal}{\sha^{{\rm NC}}_{{\rm RT}}}
\nc{\ver}{\rm V}\nc{\ld}{l}\nc{\del}{\Delta_{{\rm \ell}}}\nc{\epsl}{\epsilon_{{\rm \ell}}}
\nc{\uul}{u_{{\rm \ell}}}\nc{\oneh}{\mathbf{1}}\nc{\onew}{\mathbf{1}}
\nc{\etree}{1} \nc{\conc}{m_{RT}}
\nc{\hrtb}{H_{RT}(X\sqcup\Omega)} \nc{\hrts}{H_{RT}(X, \Omega)}\nc{\rts}{\mathcal{T}(X, \Omega)}\nc{\rfs}{\mathcal{F}(X, \Omega)} \nc{\ncshall}{\sha^{{\rm NC}}_{{\rm RT}}} \nc{\ldl}{\leq_{\mathrm{dl}}} \nc{\pla}{B_{\alpha}^{+}} \nc{\plb}{B_{\beta}^{+}}
\nc{\mapmonoid}{\frakM} \nc{\db}{\leq_{{\rm db}}} \nc{\mapm}[1]{\frakM(#1)} \nc{\mstar}{\frakM^\star(X)} \nc{\stars}[2]{#1|_{#2}}
\nc{\Id}{\mathrm{Id}} \nc\blw[1]{\lc#1\rc}\nc{\suba}[1]{|_{#1}} \nc{\Irr}{\mathrm{Irr}}
\nc\leqo{\leq_{{\rm db}}}\nc\odb{<_{{\rm db}}} \nc{\mpu}{u^{\ast}} \nc{\mpv}{v^{\ast}}
\nc{\brep}{\text{bre}_{L}} \nc{\good}{good\xspace} \nc{\sgood}{super good\xspace}  \nc{\Good}{Good\xspace}
\nc\ordc{<_{{\rm Dl}}} \nc\ordqc{\leq_{{\rm Dl}}}
\begin{document}

\title[New operated polynomial identities and Gr\"{o}bner-Shirshov bases]{
New operated polynomial identities and Gr\"{o}bner-Shirshov bases}
%

%
\author{Jinwei Wang}
\address{School of Mathematics and Physics, Lanzhou Jiaotong University, Lanzhou, Gansu 730070, P.\,R. China}
\email{wangjinwei@mail.lzjtu.cn}

\author{Zhicheng Zhu}
\address{School of Mathematics and Statistics, Lanzhou University, Lanzhou, Gansu 730000, P.\,R. China}
\email{zhuzhch16@lzu.edu.cn}

\author{Xing Gao$^*$}\thanks{*Corresponding author}
\address{School of Mathematics and Statistics, Key Laboratory of Applied Mathematics and Complex Systems, Lanzhou University, Lanzhou, Gansu 730000, P.\,R. China}
\email{gaoxing@lzu.edu.cn}

\date{\today}
\begin{abstract}
Quite recently, Bremner et al. introduced a new approach to Rota's Classification Problem
and classified some (new) operated polynomial identities. In this paper, we prove that all
operated polynomial identities classified by Bremner et al.
are Gr\"{o}bner-Shirshov.
\end{abstract}

\makeatletter
\@namedef{subjclassname@2020}{\textup{2020} Mathematics Subject Classification}
\makeatother
\subjclass[2020]{
16W99, 
16S10, 
08B20, 
13P10, 
05A05,   
}

\keywords{Rota's Classification Problem;  operated associative algebras; rewriting systems; Gr\"obner-Shirshov basis}

\maketitle

\tableofcontents

\setcounter{section}{0}

\allowdisplaybreaks

\section{Introduction}
\subsection{Rota's Classification Problem}
In the study of mathematics and mathematical physics, various linear operators---characterized by various
operator identities---played crucial roles. Inspired by this, Rota~\cite{Ro} posed the problem of
\begin{quote}
finding all possible \underline{algebraic identities} that can be satisfied by a linear operator on an \underline{algebra},
\label{qu:rota}
\end{quote}
henceforth called {\bf Rota's Classification Problem}. Here an algebra means an associative algebra.

Such operator identities interested to Rota included
\begin{eqnarray}
 \text{Endomorphism operator} &\quad&
d(x_1x_2)=d(x_1)d(x_2), \nonumber\\
 \text{Differential operator} &\quad&
d(x_1x_2)=d(x_1)x_2+x_1d(x_2), \nonumber\\
 \text{Average operator} &\quad&
P(x_1)P(x_2)=P(x_1P(x_2)),  \nonumber\\
\text{Inverse average operator} &\quad& P(x_1)P(x_2)=P(P(x_1)x_2),
\nonumber\\
 \text{(Rota-)Baxter operator of weight\ $\lambda$} &\quad&
 P(x_1)P(x_2)=P(x_1P(x_2)+P(x_1)x_2+\lambda x_1x_2),\nonumber\\
 &\quad &\quad \text{ where } \lambda \text{ is a fixed constant},\nonumber\\
\text{Reynolds operator}
  &\quad& P(x_1)P(x_2)=P(x_1P(x_2)+P(x_1)x_2-P(x_1)P(x_2)). \nonumber
\end{eqnarray}

After Rota posed Rota's Classification Problem, more and more linear operators have appeared, such as \begin{eqnarray}
\text{Differential operator of weight} ~ \lambda &\quad&
 d(x_1x_2)= d(x_1)x_2+x_1 d(x_2)+\lambda d(x_1)d(x_2), \nonumber\\
&\quad& \quad\text{ where } \lambda \text{ is a fixed constant},\nonumber\\
\text{Nijenhuis operator} &\quad& P(x_1)P(x_2)=P(x_1P(x_2)+P(x_1)x_2-P(x_1x_2)),\nonumber\\
\text{Leroux's TD operator} &\quad&
P(x_1)P(x_2)=P(x_1P(x_2)+P(x_1)x_2-x_1P(1)x_2),\nonumber\\
\text{Modified Rota-Baxter operator of weight} &\quad&
P(x_1)P(x_2)=P(x_1P(x_2)+P(x_1)x_2) + \lambda x_1x_2, \nonumber\\
\text{Modified differential operator of weight} &\quad&
d(x_1x_2)= d(x_1)x_2+x_1 d(x_2) + \lambda x_1x_2. \nonumber
\end{eqnarray}

In particular,  the endomorphism operator plays pivotal roles in Galois theory.
The differential operator is an algebraic abstraction of derivation in analysis and leads to differential algebra as an algebraic study of differential equations, which has been largely successful in many important areas \mcite{Ko, PS, Ri}.
The Rota-Baxter operator, originated from \mcite{Bax} in 1960 based on the probability study,
is to understand Spitzer's identity in fluctuation theory. The broad connections of Rota-Baxter operators with many areas in mathematics and mathematical physics are remarkable, such as the classical Yang-Baxter equation, operads, combinatorics, Hopf algebra and renormalization of quantum field theory \mcite{Ag, Bai, BBGN, Gub, GK, ZGG}.
Other linear operators have also been studied extensively~\mcite{BBGN, CGM, CK1, Mill, Nij, PZGL, Re, Ro2}.

\subsection{History in solving Rota's Classification Problem.}
There are two stages in our understanding of Rota's Classification Problem. The first stage is the construction of the
algebraic framework to consider algebraic identities in Rota's Classification Problem.
An easier case of algebraic identities satisfied by algebras is the noncommutative polynomial,
which is free and leads to the extensive study of polynomial identity (PI) rings in 1960s.
Since algebraic identities in Rota's Classification Problem involve linear operators, they
become more complicated and are realized as free objects in the category of algebras together with linear operators,
which are originated from Kurosh~\cite{Ku}.

The second stage in our understanding of Rota's Classification Problem is what distinguishes the operated polynomial identities (OPIs) satisfied by these above listed linear operators from the arbitrary OPIs? In other words, Rota apparently asked to identify ``good" OPIs that are worth of further study. Roughly speaking, since algebras in Rota's Classification Problem are associative algebras,
such OPIs looked for by Rota are compatible with associativity. In the process of characterization of these
compatibility, two special classes of OPIs are studied---differential type OPIs and Rota-Baxter type OPIs.

The study of differential type OPIs was carried out in~\mcite{GSZ},
which includes the classical differential OPI. Therein theories of Gr\"{o}bner-Shirshov bases and rewriting systems were applied successfully. Another important class of OPIs, namely Rota-Baxter type, was systematically studied in \mcite{ZGGS}. As to be expected from comparing integral calculus with differential calculus in analysis, the Rota-Baxter type OPIs are more challenging than the differential counterpart. In \mcite{ZGGS}, Rota-Baxter type OPIs were also characterized by Gr\"{o}bner-Shirshov bases and rewriting systems. An outstanding achievement of applications of Gr\"{o}bner-Shirshov bases and rewriting systems in the characterization of differential type OPIs and Rota-Baxter OPIs sheds a light to apply them to study general OPIs, which was carried out in~\cite{GG}.

Quite recently, Bremner et al. introduced a new approach to Rota's Classification Problem,
based on the rank of matrixes from OPIs~\mcite{BE}. They obtained six OPIs with degree 2 and multiplicity 1, and eighteen OPIs
and two parametrized families with degree 2 and multiplicity 2.
These operators include the derivation, average operator, inverse average operator,
Rota-Baxter operator of weight zero, Nijenhuis operator and some new operators.

In the present paper, we prove that OPIs classified in~\cite{BE} are Gr\"{o}bner-Shirshov
in the framework of~\cite{GG}, via the method of Gr\"{o}bner-Shirshov bases. In other words, we show that
OPIs classified in~\cite{BE} are ``good" OPIs searched in Rota's Classification Problem. Our study is a partial
answer of Rota's Classification Problem.

\subsection{Outline of the paper.} In Section 2, we first recall the construction of free operated algebras,
which is used to the first stage of our understanding of Rota's Classification Problem.
We then review the necessary concepts of Gr\"{o}bner-Shirshov bases and the notation of Gr\"{o}bner-Shirshov OPIs.
Section 3 is devoted to prove that all OPIs in~\cite{BE} are Gr\"{o}bner-Shirshov. Three monomial orders $\leq_{{\rm dt}}$, $\leqo$ and $\ordqc$ are recalled. Based on these three monomial orders, we first prove that all OPIs classified in~\cite{BE}
of degree 2 and multiplicity 1 are Gr\"{o}bner-Shirshov (Theorem~\mref{thm:1}). Then we show that
all OPIs in~\cite{BE} of degree 2 and multiplicity 2 are Gr\"{o}bner-Shirshov (Theorem~\mref{thm:2}).

\smallskip

\noindent
{\bf Notation.} Throughout this paper, let $\bfk$ be a unitary commutative ring,
which will be the base ring of all algebras and linear maps.
Since the leading monomials of the OPIs considered in this paper are not fixed if we involve the unity $\bfone$, we consider the case not involving the unity $\bfone$ throughout the paper. See Remarks~\mref{re:one} and~\mref{re:two} for more details.
By an algebra we mean a non-unitary associative algebra.
For a set $X$, we use $\bfk X$ to denote the free module on $X$.
Denote by $S(X)$ and $M(X)$ the free semigroup and free monoid on $X$, respectively.

\section{Operated algebras and Gr\"{o}bner-Shirshov bases}
\subsection{Operated algebras}
A.\,G. Kurosh~\cite{Ku} introduced firstly the concept of algebras with linear operators. In~\cite{Gop}, it was called operated algebras and the construction of free operated algebras was obtained. See also~\cite{BCQ}.
\begin{defn}
\begin{enumerate}
\item An {\bf operated semigroup} (resp. {\bf operated algebra}) is a semigroup (resp. algebra) $U$ together with a map (resp. linear map) $P_U: U\to U$.
\item A morphism from an operated semigroup (resp. algebra) $(U, P_U)$ to an operated semigroup (resp.  algebra) $(V,P_V)$ is a semigroup (resp.  algebra) homomorphism $f :U\to V$ such that $f \circ P_U= P_V \circ f$.  \label{de:mapset}
\end{enumerate}
\end{defn}

For a set $Y$, we denote
$$\lc Y\rc := \{ \lc y\rc \mid y\in Y \},$$
which is a disjoint copy of $Y$. Let $X$ be a set. We recall the construction of the free operated semigroup on $X$, proceeding via the finite stages $\frakM_n(X)$ recursively defined as follows.
For the initial step, we define
$$\frakM_0(X) := S(X)\,\text{ and }\, \frakM_1(X) := S(X \sqcup \lc \frakM_0(X)\rc).$$
Notice that the inclusion $X\hookrightarrow X \sqcup \lc \frakM_0\rc $ induces a monomorphism
$$i_{0}:\mapmonoid_0(X) = S(X) \hookrightarrow \mapmonoid_1(X) = S(X \sqcup \lc \frakM_0\rc  )$$
of semigroups through which we can identify $\mapmonoid_0(X) $ with its image in $\mapmonoid_1(X)$.
Assume that
$\frakM_{ n-1}(X)$ has been defined and the embedding
$$i_{n-2,n-1}\colon  \frakM_{ n-2}(X) \hookrightarrow \frakM_{ n-1}(X)$$
has been obtained for~$n\geq 2$. Consider the case of $n$. Define
\begin{equation*}
 \label{eq:frakn}
 \frakM_{ n}(X) := S \big( X\sqcup\lc\frakM_{n-1}(X) \rc\big).
\end{equation*}
Since~$\frakM_{ n-1}(X) = S \big(X\sqcup \lc\frakM_{ n-2}(X) \rc\big)$ is the free semigroup on the set $X\sqcup \lc\frakM_{ n-2}(X) \rc$,
the injection
$$  X \sqcup \lc\frakM_{ n-2}(X) \rc \hookrightarrow
    X \sqcup\lc \frakM_{ n-1}(X) \rc $$
induces a semigroup embedding
\begin{equation*}
    \frakM_{ n-1}(X) = S \big( X\sqcup \lc\frakM_{n-2}(X) \rc \big)
 \hookrightarrow
    \frakM_{ n}(X) = S\big( X\sqcup\lc\frakM_{n-1}(X) \rc \big).
\end{equation*}
Finally we define the semigroup
$$ \frakM (X):=\dirlim \frakM_{ n}(X) = \bigcup_{n\geq 0} \frakM_{ n}(X).$$
Let $\bfk\frakM(X)$ be the free
module with the basis $\frakM(X)$. Using linearity, the concatenation product on $\frakM(X)$
can be extended to a multiplication on $\bfk\frakM(X)$, turning $\bfk\frakM(X)$ into an algebra.
Define an operator
$$\lc\,\rc: \frakM(X) \to \frakM(X),\quad u \mapsto \lc u\rc.$$
By linearly, the operator $\lc\,\rc: \frakM(X) \to \frakM(X)$ can be extended to a linear operator $\lc\,\rc: \bfk \frakM(X) \to \bfk \frakM(X)$, turning $(\bfk\frakM(X),\lc\,\rc) $ into an operated algebra.

\begin{defn}Let $X$ be a set.
\begin{enumerate}
\item Elements in $\frakM(X)$ are called {\bf  bracketed words} or
{\bf  bracketed monomials on $X$}. When $X$ is finite, we
may also just list its elements, as $\frakM(x_1,x_2, x_3)$ if $X = \{x_1, x_2, x_3\}$.

\item Any $u\in \frakM(X)$ can be written uniquely as a product:
\begin{equation*}
u = u_1\cdots u_n, \text{ for some } n\geq 1,\, u_i\in X\sqcup \lc\frakM(X)\rc,\, 1\leq i\leq n.
 \end{equation*}
We call $u_i$ {\bf prime} and define $n$ to be the {\bf breadth} of $u$, denoted by $|u|$.

\item Let $\phi(x_1,\ldots,x_k)\in \bfk\mapm{X}$ with $k\geq 1$ and $x_1, \ldots, x_k\in X$. We call
$\phi(x_1,\ldots,x_k) = 0$ (or simply $\phi(x_1,\ldots,x_k)$) an
{\bf  operated polynomial identity (OPI)}.
\end{enumerate}
\end{defn}

\begin{lemma}{\bf \cite{BCQ, Gop}}
Let $X$ be a set.
\begin{enumerate}
\item The $(\frakM(X),\lc\, \rc)$ together with the natural embedding $i:X \to \frakM(X)$ is the free operated semigroup on $X$; and
\mlabel{it:mapsetm}
\item The $(\bfk\mapm{X},\lc\,\rc)$ together with the natural embedding $i: X \to \bfk\mapm{X}$ is the free operated  algebra on $X$. \mlabel{it:mapalgsg}
\end{enumerate}
\mlabel{lem:freetm}
\end{lemma}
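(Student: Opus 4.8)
The statement is that $(\frakM(X),\lc\,\rc)$ (resp. $(\bfk\mapm{X},\lc\,\rc)$) is the free operated semigroup (resp. algebra) on $X$, so the plan is to verify the corresponding universal property in each case: for every operated semigroup (resp. algebra) $(U,P_U)$ and every set map $f\colon X\to U$, there is a unique morphism of operated semigroups (resp. algebras) extending $f$ along the embedding $i$.

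First I would reduce part~(b) to part~(a) by linearity. Granting that $(\frakM(X),\lc\,\rc)$ is the free operated semigroup on $X$, for any operated algebra $(A,P_A)$ and any set map $f\colon X\to A$ I would regard the underlying multiplicative semigroup of $A$ together with $P_A$ as an operated semigroup, invoke part~(a) to obtain a unique operated-semigroup morphism $\frakM(X)\to A$ extending $f$, and then extend it $\bfk$-linearly to $\bfk\mapm{X}$. Since $\frakM(X)$ is a $\bfk$-basis of $\bfk\mapm{X}$, this linear extension is the unique algebra morphism extending the given semigroup map, and one checks directly that it commutes with the operators and restricts to $f$ on $X$; uniqueness on the basis then forces uniqueness as an operated-algebra morphism.

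The heart of the argument is therefore part~(a). Given $(U,P_U)$ and $f\colon X\to U$, I would build the extension $\bar f\colon \frakM(X)\to U$ stage by stage, mirroring the recursive construction $\frakM(X)=\dirlim \frakM_n(X)$. On $\frakM_0(X)=S(X)$ the universal property of the free semigroup yields a unique semigroup homomorphism $\bar f_0$ extending $f$. Inductively, having $\bar f_{n-1}$ on $\frakM_{n-1}(X)$, I would define a set map on the generators $X\sqcup\lc\frakM_{n-1}(X)\rc$ of $\frakM_n(X)=S(X\sqcup\lc\frakM_{n-1}(X)\rc)$ by $x\mapsto f(x)$ and $\lc u\rc\mapsto P_U(\bar f_{n-1}(u))$, and extend it by freeness of $S(-)$ to a semigroup homomorphism $\bar f_n\colon \frakM_n(X)\to U$. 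Passing to the colimit gives $\bar f$, and checking $\bar f(\lc u\rc)=P_U(\bar f(u))$ on each stage shows $\bar f$ is a morphism of operated semigroups with $\bar f\circ i=f$.

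The main obstacle I anticipate is the bookkeeping of compatibility across stages: I must verify that $\bar f_n$ restricts to $\bar f_{n-1}$ under the embedding $\frakM_{n-1}(X)\hookrightarrow\frakM_n(X)$, so that the colimit map is well defined. This follows by induction, comparing the two semigroup homomorphisms on the common generating set $X\sqcup\lc\frakM_{n-2}(X)\rc$ and appealing to freeness to conclude they agree: on $X$ both return $f(x)$, while on $\lc u\rc$ with $u\in\frakM_{n-2}(X)$ one has $\bar f_n(\lc u\rc)=P_U(\bar f_{n-1}(u))$ and $\bar f_{n-1}(\lc u\rc)=P_U(\bar f_{n-2}(u))$, which coincide by the inductive hypothesis $\bar f_{n-1}|_{\frakM_{n-2}(X)}=\bar f_{n-2}$. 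Uniqueness of $\bar f$ is handled by the same induction: any operated-semigroup morphism $g$ extending $f$ must satisfy $g(\lc u\rc)=P_U(g(u))$ and agree with $f$ on $X$, which pins down $g$ on every generator of each $\frakM_n(X)$ and hence, by freeness, on all of $\frakM(X)$.
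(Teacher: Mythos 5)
Your proof is correct. The paper offers no proof of this lemma---it is quoted from \cite{BCQ, Gop}---and your argument (building $\bar f$ stage by stage through the direct system $\frakM_n(X)$ via freeness of each $S(-)$, checking compatibility of the stages and uniqueness by induction, then reducing part~(b) to part~(a) by $\bfk$-linear extension on the basis $\frakM(X)$) is precisely the standard argument given in those references, i.e.\ the proof the construction in Section~2 is designed to support.
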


Let $\phi= \phi(x_1,\ldots,x_k)\in \bfk\mapm{X}$ be an OPI. For any operated algebra $(R,P)$ and any map $$\theta:X\to R, \, x_i \mapsto r_i, i=1, \ldots, k,$$
using the universal property of $\bfk\mapm{x_1, \ldots, x_k}$ as a free operated algebra on $\{x_1,\ldots,x_k\}$, there is a unique morphism $\free{\theta}:\bfk\mapm{x_1,\ldots,x_k}\to R$ of operated algebras that extends the map $\theta$. We use the notation
\begin{equation*}
\phi(r_1,\ldots,r_k):= \free{\theta}(\phi(x_1,\ldots,x_k))
\label{eq:phibar}
\end{equation*}
for the corresponding {\bf evaluation} or {\bf substitution} of $\phi(x_1,\ldots,x_k)$ at the point $(r_1,\ldots,r_k)$. Informally, this is the element of $R$ obtained from $\phi$ upon
replacing every $x_i$ by $r_i$, $1\leq i\leq k$ and the operator $\lc\ \rc$ by $P$.
\begin{defn}
With the above notations, we say that $\phi(x_1,\ldots,x_k)=0$ (or simply $\phi(x_1,\ldots,x_k)$) is an {\bf OPI satisfied by $(R,P)$} if
$$\phi(r_1,\ldots,r_k)=0 \quad \text{for all } r_1,\ldots,r_k\in R.$$
In this case, we call $(R,P)$ (or simply $R$) a {\bf  $\phi$-algebra} and $P$ a {\bf $\phi$-operator}.
More generally, for a subset $\Phi\subseteq \bfk\mapm{X}$, we call $R$ (resp. $P$) a {\bf $\Phi$-algebra} (resp. {\bf $\Phi$-operator}) if $R$ (resp. $P$) is a $\phi$-algebra (resp. $\phi$-operator) for each $\phi\in \Phi$.
\label{de:phialg}
\end{defn}

\begin{exam}
\begin{enumerate}
\item  If $\phi = x_1x_2-x_2x_1$, then a $\phi$-algebra is a commutative algebra.

\item If $$\phi=\lc x_1 x_2\rc -\lc x_1\rc x_2 -x_1\lc x_2\rc,$$
then a $\phi$-algebra is simply a differential algebra.

\item If $$\phi= \lc x_1\rc \lc x_2\rc-\lc x_1\lc x_2\rc\rc -\lc \lc x_1\rc x_2\rc -\lambda \lc x_1x_2\rc,$$
then a $\phi$-algebra is exactly a Rota-Baxter algebra of weight $\lambda$.
\end{enumerate}
\end{exam}

Let $(R, P)$ be an operated algebra. An operated ideal of $R$ is an ideal closed under the operator $P$.
For $S\subseteq R$, the {\bf operated ideal} $\Id(S)$ of~$R$ generated by $S$ is defined to be the smallest operated ideal of $R$ containing $S$.
For $\Phi\subseteq \bfk\mapm{X}$ and a set $Z$, let $S_\Phi(Z) \subseteq \bfk\mapm{Z}$ denote the substitution set
\begin{equation*}
S_\Phi(Z) := \{\,{\phi}(r_1,\ldots,r_k) \mid r_1,\ldots,r_k\in \mapm{Z}, \phi(x_1,\ldots,x_k)\in \Phi\,\}.
\end{equation*}
The following well-known result exhibits the existence of a free $\Phi$-algebra.

\begin{prop} {\rm (\cite{Coh})} Let $X$ be a set and $\Phi\subseteq \bfk\frakM(X)$ a system of OPIs. Then for a set $Z$, the quotient operated algebra $\bfk\mapm{Z}/\Id(S_\Phi(Z))$ is the free $\Phi$-algebra on $Z$.
\mlabel{pp:frpio}
\end{prop}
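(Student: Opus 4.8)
The plan is to verify directly that $Q := \bfk\mapm{Z}/\Id(S_\Phi(Z))$, equipped with the composite map $j\colon Z \hookrightarrow \bfk\mapm{Z} \xrightarrow{\pi} Q$ (where $\pi$ denotes the quotient map), enjoys the universal property characterizing the free $\Phi$-algebra on $Z$. Two things must be shown: first that $Q$ is genuinely a $\Phi$-algebra, so that it is an object of the relevant category, and second that every set map from $Z$ into a $\Phi$-algebra extends uniquely to an operated algebra morphism out of $Q$.

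First I would check that $Q$ is a $\Phi$-algebra. Since $\pi$ is a surjective morphism of operated algebras, an arbitrary tuple of elements of $Q$ has the form $(\pi(w_1),\ldots,\pi(w_k))$ with $w_i\in\bfk\mapm{Z}$, and because $\pi$ commutes with the multiplication and with the operator $\lc\,\rc$ it commutes with evaluation of OPIs, so $\phi(\pi(w_1),\ldots,\pi(w_k)) = \pi(\phi(w_1,\ldots,w_k))$ for each $\phi\in\Phi$. Expanding each $w_i$ in the basis $\frakM(Z)$ and using that the OPIs in $\Phi$ are linear in each of their variables, $\phi(w_1,\ldots,w_k)$ becomes a $\bfk$-linear combination of evaluations $\phi(m_1,\ldots,m_k)$ with $m_i\in\frakM(Z)$; each such evaluation lies in $S_\Phi(Z)\subseteq\Id(S_\Phi(Z))=\ker\pi$. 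Hence $\phi$ vanishes identically on $Q$, so $Q$ is a $\Phi$-algebra.

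Next I would establish the universal property. Given any $\Phi$-algebra $(R,P_R)$ and any set map $f\colon Z\to R$, freeness of $\bfk\mapm{Z}$ (Lemma~\mref{lem:freetm}) yields a unique operated algebra morphism $F\colon \bfk\mapm{Z}\to R$ extending $f$. The key point is that $\Id(S_\Phi(Z))\subseteq\ker F$: since $\ker F$ is an operated ideal, it suffices to check that $F$ kills the generators, and indeed $F(\phi(r_1,\ldots,r_k)) = \phi(F(r_1),\ldots,F(r_k)) = 0$ because $R$ satisfies every OPI in $\Phi$. Therefore $F$ factors as $F=\bar f\circ\pi$ through the quotient, producing an operated algebra morphism $\bar f\colon Q\to R$ with $\bar f\circ j = f$. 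Uniqueness is immediate: $j(Z)$ generates $Q$ as an operated algebra, so any operated algebra morphism out of $Q$ is determined by its restriction to $j(Z)$.

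I expect the first step---verifying that $Q$ is itself a $\Phi$-algebra---to be the delicate one. The generating set $S_\Phi(Z)$ is built only from substitutions of bracketed monomials $r_i\in\frakM(Z)$, whereas membership in the category of $\Phi$-algebras requires $\phi$ to vanish under \emph{all} substitutions of elements of $Q$. Bridging this gap rests squarely on the multilinearity (linearity in each variable) of the OPIs under consideration, which is what permits the passage from monomial substitutions to arbitrary ones via linear expansion; all OPIs treated in this paper, and in particular all of those classified in~\cite{BE}, are of this form. The universal-property half, by contrast, is formal and needs no such hypothesis, since there the required vanishing is supplied by the ambient $\Phi$-algebra $R$ rather than by the structure of $Q$.
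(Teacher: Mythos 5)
The paper itself offers no proof of Proposition~\mref{pp:frpio}: it is quoted from Cohn's book~\cite{Coh}, so there is no argument in the paper to compare yours against. Taken on its own terms, your proof is the standard universal-property argument, and both halves are sound as far as they go. The second half (factoring the unique operated morphism $F\colon\bfk\mapm{Z}\to R$ supplied by Lemma~\mref{lem:freetm} through the quotient, because $\ker F$ is an operated ideal containing every generator $\phi(r_1,\ldots,r_k)$ of $\Id(S_\Phi(Z))$) is completely general and correct. The first half---that the quotient is itself a $\Phi$-algebra---is correct exactly under the multilinearity hypothesis you invoke.

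You should be aware, though, that the caveat you flag at the end is not merely what makes your argument go through: it is a genuine hypothesis, without which the proposition as literally stated is false. The paper's $S_\Phi(Z)$ only allows substitutions of monomials $r_i\in\mapm{Z}$, and for a non-multilinear OPI this is too small an ideal. Concretely, take $\phi=\lc x_1\rc\lc x_1\rc$ and distinct $z_1,z_2\in Z$. By Lemma~\mref{lem:opideal}, $\Id(S_\Phi(Z))$ is the linear span of the monomials $q|_{\lc u\rc\lc u\rc}$ with $u\in\mapm{Z}$, $q\in\frakM^{\star}(Z)$; neither $\lc z_1\rc\lc z_2\rc$ nor $\lc z_2\rc\lc z_1\rc$ contains a subword of the form $\lc u\rc\lc u\rc$, so $\lc z_1\rc\lc z_2\rc+\lc z_2\rc\lc z_1\rc\notin\Id(S_\Phi(Z))$. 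Yet in the quotient,
\begin{equation*}
\phi\bigl(\pi(z_1+z_2)\bigr)=\pi\bigl(\lc z_1+z_2\rc\lc z_1+z_2\rc\bigr)=\pi\bigl(\lc z_1\rc\lc z_2\rc+\lc z_2\rc\lc z_1\rc\bigr)\neq 0,
\end{equation*}
after discarding the two terms $\lc z_i\rc\lc z_i\rc\in S_\Phi(Z)$. So the quotient is not a $\phi$-algebra, and no appeal to a cleverer argument can repair this. The correct reading is therefore the one you adopt: either $\Phi$ is assumed multilinear (each variable occurring exactly once in every monomial of every $\phi\in\Phi$, which holds for every OPI in this paper and in the classification of~\cite{BE}), or $S_\Phi(Z)$ must be redefined to allow substitutions $r_i\in\bfk\mapm{Z}$. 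With that hypothesis stated explicitly, your proof is complete.
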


\subsection{Gr\"{o}bner-Shirshov bases}
We provide some notations of Gr\"obner-Shirshov bases~\cite{BCQ, GGZ, GSZ}.
Let us begin with the concept of $\star$-word, which will be used frequently.

\begin{defn}
Let $Z$ be a set, $\star$ a symbol not in $Z$ and $Z^\star = Z \sqcup \{\star\}$.
\begin{enumerate}
\item By a {\bf $\star$-bracketed word} on $Z$, we mean any bracketed word in $\mapm{Z^\star}$ with exactly one occurrence of $\star$, counting multiplicities. The set of all $\star$-bracketed words on $Z$ is denoted by $\frakM^{\star}(Z)$.
\item For $q\in \mstar$ and $u \in  \frakM({Z})$, we define $\stars{q}{u}:= q|_{\star \mapsto u}$ to be the bracketed word on $Z$ obtained by replacing the symbol $\star$ in $q$ by $u$.

\item For $q\in \mstar$ and $s=\sum_i c_i u_i\in \bfk\frakM{(Z)}$, where $c_i\in\bfk$ and $u_i\in \frakM{(Z)}$, we define
\begin{equation*}
 q|_{s}:=\sum_i c_i q|_{u_i}\,. \vspace{-5pt}
\end{equation*}
\end{enumerate}
\mlabel{def:starbw}
\end{defn}

The $\star$-bracketed words can be used to characterize operated ideals in $\bfk\mapm{Z}$.

\begin{lemma}{\rm (\cite{BCQ, GSZ})}
Let $Z$ be a set and $S \subseteq \bfk\mapm{Z}$. Then the operated ideal generated by $S$ is
\begin{equation*}
\hspace{10pt}\Id(S) = \left\{\, \sum_{i} c_i q_i|_{s_i} \mid c_i\in \bfk,
q_i\in \frakM^{\star}(Z), s_i\in S \right\}.
\end{equation*}
\mlabel{lem:opideal}
\end{lemma}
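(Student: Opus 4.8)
The plan is to prove the claimed equality by double inclusion, writing $I$ for the right-hand side
\[
I:=\Bigl\{\, \sum_{i} c_i\, q_i|_{s_i} \mid c_i\in\bfk,\ q_i\in\frakM^{\star}(Z),\ s_i\in S \,\Bigr\}.
\]
Recall that, by the definitions recalled above, $\Id(S)$ is the \emph{smallest} operated ideal containing $S$, where an operated ideal means a two-sided ideal of $\bfk\frakM(Z)$ that is also stable under the operator $\lc\,\rc$. Thus the inclusion $\Id(S)\subseteq I$ will follow as soon as $I$ is shown to be an operated ideal containing $S$, while the reverse inclusion $I\subseteq\Id(S)$ amounts to checking that each generator $q|_{s}$ already lies in $\Id(S)$.

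For $I\subseteq\Id(S)$, since $\Id(S)$ is a $\bfk$-submodule it suffices to prove that $q|_{s}\in\Id(S)$ for every $q\in\frakM^{\star}(Z)$ and $s\in S$, and I would do this by induction on the number $d(q)$ of bracket pairs $\lc\,\rc$ enclosing the unique occurrence of $\star$ in $q$. Decomposing $q=q_1\cdots q_n$ into its primes, exactly one prime $q_j$ contains $\star$. If $d(q)=0$ then $q_j=\star$, and using linearity of substitution one gets $q|_{s}=q_1\cdots q_{j-1}\,s\,q_{j+1}\cdots q_n$, which lies in $\Id(S)$ because $\Id(S)$ is a two-sided ideal containing $s$. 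If $d(q)\geq 1$ then $q_j=\lc q'\rc$ with $q'\in\frakM^{\star}(Z)$ and $d(q')=d(q)-1$, so $q|_{s}=q_1\cdots q_{j-1}\,\lc q'|_{s}\rc\,q_{j+1}\cdots q_n$; by the induction hypothesis $q'|_{s}\in\Id(S)$, hence $\lc q'|_{s}\rc\in\Id(S)$ by stability under the operator, and multiplying on both sides keeps it in the ideal.

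For $\Id(S)\subseteq I$, I would verify that $I$ is an operated ideal containing $S$. Taking $q=\star$ and $c=1$ gives $s=\star|_{s}\in I$, so $S\subseteq I$, and $I$ is a $\bfk$-submodule by construction. The absorption and operator-closure reduce to one combinatorial observation: if $w\in\frakM(Z)$ (a word with no $\star$) and $q\in\frakM^{\star}(Z)$, then the concatenations $wq$ and $qw$ again lie in $\frakM^{\star}(Z)$, and $\lc q\rc\in\frakM^{\star}(Z)$, since in each case the $\star$-count is preserved. Because substitution is local, it commutes with these operations: $w(q|_{s})=(wq)|_{s}$, $(q|_{s})w=(qw)|_{s}$ and $\lc q|_{s}\rc=\lc q\rc|_{s}$. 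Extending by $\bfk$-linearity over $\bfk\frakM(Z)$ then shows that $I$ absorbs left and right multiplication and is closed under $\lc\,\rc$, so $I$ is an operated ideal, giving $\Id(S)\subseteq I$ and hence equality.

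The main obstacle is really the structural analysis of $\star$-words that underlies both directions: one must make precise that the unique $\star$ sits inside a uniquely determined chain of nested brackets, so that every $q\in\frakM^{\star}(Z)$ is obtained from $\star$ by a finite sequence of one-sided multiplications by primes and applications of $\lc\,\rc$, and that substitution $q\mapsto q|_{s}$ is compatible with each of these elementary operations. Once the identities $w(q|_{s})=(wq)|_{s}$ and $\lc q|_{s}\rc=\lc q\rc|_{s}$ are established, both inclusions follow by the routine induction and the defining closure properties of an operated ideal.
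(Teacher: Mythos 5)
Your proof is correct. The paper itself gives no proof of this lemma, citing \cite{BCQ, GSZ} instead, and your double-inclusion argument — showing each $q|_{s}$ lies in $\Id(S)$ by induction on the bracket depth of $\star$, and showing the right-hand side is itself an operated ideal containing $S$ via the compatibilities $w(q|_{s})=(wq)|_{s}$, $(q|_{s})w=(qw)|_{s}$, $\lc q|_{s}\rc=\lc q\rc|_{s}$ — is essentially the standard argument found in those references.
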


As preparation, we expose some notations.

\begin{defn}
Let $Z$ be a set, $s \in \bfk\mapm{Z}$ and $\leq$ a linear order on $\frakM(Z)$.
\begin{enumerate}
\item Let $s \notin \bfk$. The {\bf leading monomial} $\lbar{s}$ of $s$ is the largest monomial appearing in $s$.
The {\bf leading coefficient $c_s$ of $s$} is the coefficient of $\lbar{s}$ in $s$.
\mlabel{item:monic}

\item If $s \in \bfk$, we define the {\bf leading monomial of $s$} to be $1$ and the {\bf leading coefficient of $s$} to be $c_s= s$.\mlabel{item:scalar}

\item The element $s$ is called {\bf monic  with respect to $\leq$} if $s\notin \bfk $ and $c_s=1$. A subset $S \subseteq \bfk\mapm{Z}$ is called {\bf monic with respect to $\leq$} if every $s \in S$ is monic with respect to $\leq$.
\mlabel{it:res}
\end{enumerate}
\mlabel{def:irrS}
\end{defn}

A monomial order is needed in the theory of Gr\"{o}bner-Shirshov bases,
which is a well-order compatible with all operations.

\begin{defn}
Let $Z$ be a set. A {\bf monomial order on $\frakM(Z)$} is a
well-order $\leq$ on $\frakM(Z)$ such that
\begin{equation}
u < v \Longrightarrow q|_u < q|_v, \,\text{ for } \, u, v \in \frakM(Z)\,\text{ and }\, q \in \frakM^{\star}(Z).
\mlabel{eq:morder}
\end{equation}
\mlabel{de:morder}
\end{defn}

\begin{remark}
Since $\leq$ is a well-order, we have $u < \lc u \rc$ for all $u \in \frakM(Z)$ by Eq.~\meqref{eq:morder}.
\end{remark}

There are two kinds of compositions to consider in Gr\"{o}bner-Shirshov bases.

\begin{defn}
Let $Z$ be a set, $\leq$ a monomial order on $\mapm{Z}$ and $f, g \in\bfk\mapm{Z}$ monic.
\begin{enumerate}
  \item  If there are $u, v,w\in \mapm{Z}$ such that $w = \lbar{f}u = v\lbar{g}$ with max$\{|\lbar{f}|, |\lbar{g}|\} < |w| <
|\lbar{f}| + |\lbar{g}|$, we call
$$(f,g)_w:=(f,g)^{u,v}_w:= fu - vg$$
the {\bf intersection composition of $f$ and $g$ with respect to $w$}.
\mlabel{item:intcomp}
  \item  If there are $w\in\mapm{Z}$ and $q\in\frakM^{\star}(Z)$ such that $w = \lbar{f} = q\suba{\lbar{ g}},$ we call
$$(f,g)_w:=(f,g)^q_w := f - q\suba{g}$$
the {\bf including composition of $f$ and $g$ with respect to $w$}.
\mlabel{item:inccomp}
\end{enumerate}
\mlabel{defn:comp}
\end{defn}

Now we are ready for the concept of Gr\"{o}bner-Shirshov bases.

\begin{defn}
Let $Z$ be a set, $\leq$ a monomial order on $\mapm{Z}$ and  $S\subseteq\bfk\mapm{Z}$ monic.
\begin{enumerate}
\item  An element $f\in\bfk\mapm{Z}$ is called {\bf  trivial modulo $(S, w)$} with $w\in \frakM(Z)$ if
$$f =\sum_ic_i q_i\suba{s_i} \, \text{ with } \lbar{q_i\suba{s_i}} < w, \,\text{ where } c_i\in\bfk, q_i\in\frakM^{\star}(Z), s_i\in S,$$
denoted by $f\equiv 0 \mod(S, w)$. We write $f \equiv g \mod(S, w)$ if $f-g \equiv 0 \mod(S, w)$.

\item   We call $S$ a {\bf Gr\"{o}bner-Shirshov basis} in $\bfk\mapm{Z}$ with respect
to $\leq$ if, for all pairs $f, g \in S$, every intersection composition of the form $(f, g)^{u,v}_w$
is trivial modulo $(S, w)$, and every including composition of the form $(f, g)^q_w$ is trivial modulo $(S, w)$.
\end{enumerate}
\end{defn}

We end this section with a characterization of Rota's Classification Problem via the method of Gr\"{o}bner-Shirshov bases~\cite{GG} .

\begin{defn}
Let $X$ be a set and $\Phi\subseteq \bfk\frakM(X)$ a system of OPIs. Let $Z$ be a set and $\leq$
a monomial order on $\frakM(Z)$. We call $\Phi$ {\bf Gr\"{o}bner-Shirshov  on $Z$ with respect to} $\leq$ if $S_\Phi(Z)$ is a Gr\"{o}bner-Shirshov basis in $\bfk \frakM(Z)$ with respect to $\leq$.
\mlabel{def:pgord}
\end{defn}

\section{Gr\"{o}bner-Shirshov operated polynomial identities}
In this section, we prove that all OPIs classified in~\cite{BE} are Gr\"{o}bner-Shirshov.
{\em In the rest of the paper}, in order to be consistent with the notations in~\mcite{BE},
we use $L$ to denote the operator $\lc \,\rc$.

\subsection{OPIs of degree 2 and multiplicity 1}
In this subsection, we prove that all OPIs of degree 2 and multiplicity 1 classified in~\cite{BE} are Gr\"{o}bner-Shirshov.
Here the degree means the number of variables in each term and the multiplicity is the the number of operators $L$ in each term.
For example, the OPI $$L(x_1 x_2) - x_1 L(x_2) - L( x_1) x_2$$
has degree 2 and multiplicity 1. Let us now recall these OPIs.

\begin{lemma}~\cite[Theorem~5.1]{BE}
Let $X$ be a set. Consider the (nonzero) OPI in $\bfk\frakM(X)$:
$$aL(x_1 x_2) + bL(x_1) x_2 + cx_1 L(x_2) \,\text{ with } a,b,c\in \bfk\,\text{ and }\, x_1, x_2\in X. $$
The matrix of consequences $M(R)$ has rank 14 or 17. Rank 14 occurs if and only
if the values of the coefficients (up to nonzero scalar multiples) correspond to one
of the following six OPIs:
\begin{align*}
&L(x_1 x_2), \quad L(x_1 x_2) -x_1 L(x_2), \quad L(x_1 x_2) - L( x_1) x_2,\\
&L(x_1 x_2) - x_1 L(x_2) - L( x_1) x_2, \quad  L( x_1) x_2, \quad x_1 L(x_2).
\end{align*}
\mlabel{lem:one}
\end{lemma}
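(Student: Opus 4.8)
The statement is quoted from \cite[Theorem~5.1]{BE}, and for the purposes of the present paper it suffices to invoke that reference, since only the resulting list of six OPIs is used in the sequel. To verify it independently, the claim reduces to a single finite parametric rank computation, which I would carry out as follows.

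First I would import from \cite{BE} the precise construction of the matrix of consequences $M(R)$, which is not recalled in the excerpt above. One fixes the ambient space $V$ of multilinear bracketed monomials of the degree and multiplicity attached in \cite{BE} to the consequences of a degree-$2$, multiplicity-$1$ OPI, takes the monomial basis of $V$ as the column index set, and lets the rows of $M(R)$ be the coordinate vectors of the generating consequences of $R = aL(x_1x_2) + bL(x_1)x_2 + cx_1L(x_2)$. These consequences are produced from $R$ by the natural operated-algebra operations — left and right multiplication by a fresh variable, substitution of a product into a slot, and application of $L$ — together with the induced permutation action, all reduced modulo associativity. Since each such consequence is $\bfk$-linear in the coefficients, every entry of $M(R)$ is a linear form in $a,b,c$, and $M(R)$ becomes a single matrix over $\bfk[a,b,c]$.

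The heart of the argument is to compute $\operatorname{rank} M(R)$ as a function of $[a:b:c] \in \mathbb{P}^2$. I would first verify that every $18 \times 18$ minor of $M(R)$ vanishes identically while some $17 \times 17$ minor does not, so that the rank equals $17$ on the dense open complement of the degeneration locus $D := \{[a:b:c] : \operatorname{rank} M(R) < 17\}$, the common zero set of the $17 \times 17$ minors. The task is then to show that $D$ consists of exactly the six projective points $[1:0:0]$, $[1:0:-1]$, $[1:-1:0]$, $[1:-1:-1]$, $[0:1:0]$, $[0:0:1]$ and that the rank equals exactly $14$ at each of them. The main obstacle is that $D$ is not cut out by an obvious single polynomial condition — the six points are not the complete intersection of any natural pair of quadrics — so identifying $D$ requires either a symbolic Gaussian elimination that records each pivot together with the locus on which it vanishes, or a Gr\"obner-basis computation of the determinantal ideal generated by the $17 \times 17$ minors followed by a description of its zero set. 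One must simultaneously rule out the intermediate ranks $15$ and $16$: on $D$ every $15 \times 15$ minor must vanish while some $14 \times 14$ minor must not, so that the rank falls all the way to $14$, which is precisely the assertion that only the values $14$ and $17$ occur.

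Finally I would close the ``if and only if'' by substituting each of the six coefficient triples into $M(R)$ and confirming directly that $\operatorname{rank} M(R) = 14$, and by evaluating at one generic triple to confirm rank $17$; matching the six triples to the displayed OPIs then yields the stated classification.
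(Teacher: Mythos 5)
The paper gives no proof of this lemma at all---it is imported verbatim from \cite[Theorem~5.1]{BE} as an external classification result, and your primary move (invoking that citation, since only the resulting list of six OPIs is used later) is exactly what the paper does. Your supplementary sketch of how one would verify the rank computation in \cite{BE} is a reasonable outline of that reference's method, but it is not needed for, and does not appear in, the present paper.
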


Next we recall the monomial order $\leq_{\rm dt}$ on $\mathfrak{S}(Z)$~\cite{GSZ}. Let $(Z, \leq)$ be a well-ordered set.
For $u\in \frakM(Z)$, denote by $\deg_Z(u)$ the
number of $x \in Z$ in $u$ with repetition.
For any
$$u=u_1\cdots u_m\,\text{ and }\, v=v_1\cdots v_n, $$
where $u_i$ and $v_j$ are prime. Define
$u <_{\rm dt} v $ inductively on $\dep(u)+\dep(v)\geq 0$.
For the initial step of $\dep(u)+\dep(v) = 0$, we have $u,v\in S(Z)$ and define $u <_{\rm dt} v$ if $u<_{\rm deg-lex} v$, that is,
$$(\deg_Z(u), u_1, \ldots, u_m) < (\deg_Z(v), v_1, \ldots, v_n) \, \text{ lexicographically}.$$
Then $\leq_{\rm dt}$ is a monomial order on $S(Z)$~\cite{BN}.
For the induction step, if $u = \lc u'\rc$ and $v = \lc v' \rc$, define
$u <_{\rm dt} v \,\text{ if }\,  {u'} <_{\rm dt} {v'}.$
If $u \in Z$ and $v= \lc v'\rc$, define $u <_{\rm dt} v$.
Otherwise, define
$$u <_{\rm dt} v \, \text{ if }\, (\deg_Z(u),  u_1, \ldots, u_m) < (\deg_Z(v),  v_1, \ldots, v_n) \, \text{ lexicographically}.$$
Then $\leq_{\rm dt}$ is a monomial order on $\frakM(Z)$~\cite{GSZ}.

Now we arrive at our first main result of this paper.

\begin{theorem}
Let $X$ and $Z$ be sets. The six OPIs in $\bfk\frakM(X)$ of degree 2 and multiplicity 1 classified in~\cite{BE} listed in Lemma~\mref{lem:one} are respectively Gr\"{o}bner-Shirshov on $Z$ with respect to the monomial order $\leq_{{\rm dt}}$.
\mlabel{thm:1}
\end{theorem}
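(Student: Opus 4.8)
The plan is to fix $\Phi=\{\phi\}$ for each of the six OPIs $\phi$ of Lemma~\mref{lem:one} in turn and to verify that the substitution set $S_\Phi(Z)$ is a Gr\"obner--Shirshov basis with respect to $\leq_{\rm dt}$, i.e. that every intersection and every including composition of elements of $S_\Phi(Z)$ is trivial modulo $(S_\Phi(Z),w)$; by Definition~\mref{def:pgord} this is exactly the assertion to be proved. The first step is to read off leading monomials. For any $r_1,r_2\in\frakM(Z)$ the monomials $L(r_1r_2)$, $r_1L(r_2)$ and $L(r_1)r_2$ all have the same $\deg_Z$, so their $\leq_{\rm dt}$-order is decided by their leading primes; the defining clauses of $\leq_{\rm dt}$ (namely $z<_{\rm dt}L(v)$ for $z\in Z$, and $L(u)<_{\rm dt}L(v)$ iff $u<_{\rm dt}v$, together with the degree-first comparison) give $r_1L(r_2)<_{\rm dt}L(r_1r_2)$ and, since $\deg_Z(r_1)<\deg_Z(r_1r_2)$, also $L(r_1)r_2<_{\rm dt}L(r_1r_2)$. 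Hence $\lbar{\phi(r_1,r_2)}=L(r_1r_2)$ for the four OPIs that contain the term $L(x_1x_2)$, while the two monomial OPIs $L(x_1)x_2$ and $x_1L(x_2)$ have leading monomials $L(r_1)r_2$ and $r_1L(r_2)$ respectively.

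Next I would dispatch the three single-monomial OPIs $L(x_1x_2)$, $L(x_1)x_2$, $x_1L(x_2)$ at once: here every $s\in S_\Phi(Z)$ satisfies $s=\lbar s$, so any intersection composition equals $fu-vg=w-w=0$ and any including composition equals $f-q|_g=\lbar f-q|_{\lbar g}=0$; thus all compositions vanish identically and $S_\Phi(Z)$ is trivially a Gr\"obner--Shirshov basis. The remaining work concerns the three OPIs $L(x_1x_2)-x_1L(x_2)$, $L(x_1x_2)-L(x_1)x_2$ and $L(x_1x_2)-x_1L(x_2)-L(x_1)x_2$, whose common leading monomial $L(r_1r_2)$ has breadth $1$. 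An intersection composition requires $\max\{|\lbar f|,|\lbar g|\}<|w|<|\lbar f|+|\lbar g|$, i.e. $1<|w|<2$, which is impossible; so there are no intersection compositions and only including compositions $w=\lbar f=q|_{\lbar g}$ remain.

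The heart of the proof is these including compositions, which split into two cases. In case (A) one has $q=\star$, so $\lbar f=\lbar g$, i.e. a single word $w_0=r_1r_2=r_1'r_2'$ cut at two different prime boundaries, and the composition is $f-g$. Writing $w_0=p_1\cdots p_n$ as a product of primes and telescoping over one-prime steps reduces the problem to two adjacent cuts, $r_1'=r_1p$ and $r_2=pr_2'$. For the trinomial (differential) OPI a direct expansion then yields
\[ f-g=(\star\,r_2')|_{s_1}-(r_1\,\star)|_{s_2},\qquad s_1=\phi(r_1,p),\ s_2=\phi(p,r_2')\in S_\Phi(Z), \]
and the two binomial OPIs give a single term of this shape. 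Since $\lbar{(\star\,r_2')|_{s_1}}=L(r_1p)r_2'<_{\rm dt}L(r_1pr_2')=w$ and $\lbar{(r_1\,\star)|_{s_2}}=r_1L(pr_2')<_{\rm dt}w$ by the same degree/leading-prime comparison as above, each adjacent difference---and hence $f-g$---is trivial modulo $(S_\Phi(Z),w)$.

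Finally, in case (B) one has $q\ne\star$, so $q=L(\tilde q)$ with $\tilde q|_{\lbar g}=r_1r_2$ and the occurrence of $L(r_1'r_2')$ lies entirely within $r_1$ or within $r_2$; the reduction coming from $g$ acts at this nested position while the reduction coming from $\phi$ acts at the outer bracket, so the two act at independent locations and $f$ and $q|_g$ admit a common reduct, making $f-q|_g$ a combination of terms $q'|_{s}$ with $\lbar{q'|_s}<w$. I expect the main obstacle to be case (A) for the trinomial OPI: one must produce the explicit rewriting above for all three OPIs uniformly and check, via the precise shape of $\leq_{\rm dt}$, that every monomial created uses only instances of $\phi$ on strictly shorter words and has leading monomial strictly below $w$; case (B) is then a routine locality (diamond-type) argument.
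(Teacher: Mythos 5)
Your proposal is correct, but it reaches the theorem by a genuinely different route than the paper. The paper's own proof is two sentences long: the three monomial OPIs are trivially Gr\"obner--Shirshov (precisely your observation that $s=\lbar{s}$ forces every intersection composition $fu-vg=w-w$ and every including composition $f-q|_g=w-w$ to vanish identically), while the three remaining OPIs --- $L(x_1x_2)-x_1L(x_2)$, $L(x_1x_2)-L(x_1)x_2$ and the derivation identity --- are \emph{differential type} OPIs, whose Gr\"obner--Shirshov property with respect to $\leq_{\rm dt}$ is simply quoted from prior work (the proof cites \cite{GGZ}, though the differential-type result is the content of \cite{GSZ}). You instead reconstruct that quoted result from first principles: you identify the leading monomial $L(r_1r_2)$, rule out intersection compositions by the breadth-one constraint $1<|w|<2$, and split the including compositions into the self-overlap case $q=\star$ (one word $r_1r_2=r_1'r_2'$ cut at two different prime boundaries) and the nested case $q\neq\star$. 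Your case (A) is the genuine mathematical content of the differential-type theorem, and it checks out: the telescoping to adjacent cuts, the identity $f-g=(\star\,r_2')|_{s_1}-(r_1\,\star)|_{s_2}$ with $s_1=\phi(r_1,p)$, $s_2=\phi(p,r_2')$, and the order estimates $L(r_1p)r_2'<_{\rm dt}w$ and $r_1L(pr_2')<_{\rm dt}w$ all follow from the defining clauses of $\leq_{\rm dt}$ exactly as you say. What each approach buys: the paper's proof is short and defers all verification to the literature; yours is self-contained and makes visible exactly which ambiguities occur and why they resolve --- notably the $q=\star$ re-factorization ambiguity, which does not even appear in the case lists of the paper's detailed multiplicity-two computations (Propositions~\mref{pp:1}--\mref{pp:6} treat only nested occurrences). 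The one respect in which your write-up is not yet a complete proof is case (B), which you leave as a ``routine locality argument''; for full rigor this should be expanded into the explicit verification $f-q|_g\equiv 0 \mod (S,w)$, writing $g=\lbar{g}-g'$ and reducing $L(p|_{g'}r_2)$, $p|_{\lbar{g}}L(r_2)$ and $L(p|_{\lbar{g}})r_2$ to matching lower terms, in the style the paper displays for the multiplicity-two identities. That expansion is mechanical and introduces no new difficulty, so the proposal as a whole is sound.
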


\begin{proof}
It is trivial that three monomial OPIs are respectively Gr\"{o}bner-Shirshov on $Z$ with respect to the monomial order $\leq_{{\rm dt}}$.
Other three OPIs are differential type OPIs, which are Gr\"{o}bner-Shirshov on $Z$ with respect to the monomial order $\leq_{{\rm dt}}$~\cite{GGZ}.
\end{proof}

\subsection{OPIs of degree 2 and multiplicity 2}
In this subsection, we turn to prove that all
OPIs of degree 2 and multiplicity 2 classified in~\cite{BE} are Gr\"{o}bner-Shirshov.
Let us first review these OPIs.

\begin{lemma}~\cite[Theorem~6.12]{BE}
Let $X$ be a set. The matrix $M(R)^t$ has rank 16 if and only if the
parameters $a, b, c, d, e, f$ correspond to one of the following OPIs in $\bfk\frakM(X)$:
\begin{align*}
&L^2(x_1x_2) - L^2(x_1) x_2 - x_1L^2(x_2), \\
&L^2(x_1x_2) - L^2(x_1) x_2 , \\
&L^2(x_1x_2) - x_1L^2(x_2), \\
&L^2(x_1x_2),  \\
&L^2(x_1) x_2 ,\\
&x_1L^2(x_2),
\end{align*}
where $x_1, x_2\in X$. The matrix $M(R)^t$ has rank 19 if and only if the parameters $a, b, c, d, e, f$
correspond to one of the following OPIs in $\bfk\frakM(X)$:
\begin{align*}
&L^2(x_1x_2) +L(x_1L(x_2)) + x_1L^2(x_2)\quad \quad \text{(New identity A (right))},  \\
& L^2(x_1x_2) +L(L(x_1)x_2) + L^2(x_1) x_2 \quad \quad \text{(New identity A (left))},\\
&L^2(x_1x_2) + dL(x_1L(x_2))  - (d+1)x_1L^2(x_2) \,\text{ with } d\in \bfk\setminus\{0\} \quad \quad \text{(New identity B (right))},  \\
& L^2(x_1x_2) + b L(L(x_1)x_2) -(b+1) L^2(x_1) x_2 \,\text{ with } b\in \bfk\setminus\{0\} \quad \quad \text{(New identity B (left))}, \\
&L^2(x_1x_2) + L^2(x_1)x_2 + x_1L^2(x_2) + 2L(x_1)L(x_2) - 2 L(L(x_1)x_2) - 2L(x_1L(x_2))\quad \quad \text{(New identity C)},\\
&L(L(x_1)x_2) - L^2(x_1)x_2 \quad \quad \text{($P_1$)},  \\
&L(L(x_1)x_2)  \quad \quad \quad \quad\quad \quad\,\,\text{($P_2$)},  \\
& L(x_1L(x_2)) - x_1L^2(x_2) \quad \quad \text{($P_3$)},\\
& L(x_1L(x_2)) \quad \quad \quad \quad\quad \quad\,\,\text{($P_4$)},\\
& L(x_1)L(x_2) \quad \quad \quad \quad\quad \quad\,\,\text{($P_5$)}, \\
& L(x_1)L(x_2) - L(L(x_1)x_2) - L(x_1L(x_2))  \quad \quad \text{(Rota-Baxter)},\\
& L(x_1)L(x_2) - L(L(x_1)x_2) - L(x_1L(x_2)) + L^2(x_1 x_2)  \quad \quad \text{(Nijenhuis)},\\
& L(x_1)L(x_2) - L(L(x_1)x_2) \quad \quad \text{(inverse average)},\\
& L(x_1)L(x_2) - L(x_1L(x_2))  \quad \quad \text{(average)},
\end{align*}
where $x_1, x_2\in X$.
\mlabel{lem:two}
\end{lemma}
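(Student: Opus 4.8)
The statement is, at bottom, a rank computation for a matrix whose entries depend polynomially on the six scalar parameters, so the plan is to (i) make $M(R)^t$ explicit, (ii) determine its generic rank and the loci where the rank drops to $19$ and to $16$, and (iii) match these loci against the two lists of OPIs. Throughout, $\mathrm{rank}\,M(R)^t = \mathrm{rank}\,M(R)$, so the transpose is immaterial to the rank.

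First I would fix the ambient spaces and write the generic degree-$2$, multiplicity-$2$ OPI as
$$\phi = a\,L^2(x_1x_2) + b\,L^2(x_1)x_2 + c\,x_1L^2(x_2) + d\,L(x_1L(x_2)) + e\,L(L(x_1)x_2) + f\,L(x_1)L(x_2),$$
a generic element of the six-dimensional space $W_2$ of multilinear degree-$2$, multiplicity-$2$ bracketed monomials, whose six coordinates are precisely $a,\dots,f$. The consequences of $\phi$ span the relations it forces in arity $3$ at multiplicity $2$; since composing with $L$ would raise the multiplicity, they arise solely by operadic composition with the associative product $\mu$, namely the insertions $\phi\circ_1\mu,\ \phi\circ_2\mu$ (substituting a product into a variable slot of $\phi$) and $\mu\circ_1\phi,\ \mu\circ_2\phi$ (multiplying $\phi$ by a fresh variable), together with the $S_3$-action relabelling the three variables. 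Each consequence lies in the space $W_3$ of multilinear degree-$3$, multiplicity-$2$ bracketed monomials; expanding it in the fixed monomial basis of $W_3$ gives a coordinate vector linear in $a,\dots,f$, and stacking these vectors yields $M(R)$ over $\bfk[a,\dots,f]$, whose rank is the dimension of the span of the consequences inside $W_3$.

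The core is then a symbolic rank analysis. I would row-reduce $M(R)^t$ over the function field $\bfk(a,\dots,f)$ to find the generic rank (necessarily $>19$) and record the pivot polynomials. Writing $I_k$ for the ideal generated by the $k\times k$ minors, the rank equals $19$ exactly on $V(I_{20})\setminus V(I_{19})$ and equals $16$ exactly on $V(I_{17})\setminus V(I_{16})$; since $V(I_{17})\subseteq V(I_{20})$ these strata are disjoint. The forward direction is then a finite check: for each of the six rank-$16$ OPIs and each of the fourteen rank-$19$ OPIs---with the two one-parameter families (New identity B left/right) verified identically in the free parameter---substitute the corresponding values of $a,\dots,f$ and confirm the numerical rank is $16$, respectively $19$. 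A useful organizing observation is that all six rank-$16$ identities have $d=e=f=0$ and, under $L\mapsto L^2$, coincide with the six good multiplicity-one OPIs of Lemma~\mref{lem:one}; this guides, though does not by itself complete, the rank-$16$ analysis. The backward direction is the substantive one: it requires showing $V(I_{17})\setminus V(I_{16})$ and $V(I_{20})\setminus V(I_{19})$ contain \emph{no} parameter points beyond those listed, which amounts to a primary (or at least radical) decomposition of the minor ideals and the identification of each component, up to the evident symmetries---nonzero rescaling of $\phi$, the swap $x_1\leftrightarrow x_2$, and the left/right duality interchanging $L(\cdot)\,x$ with $x\,L(\cdot)$. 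These symmetries sort the fourteen rank-$19$ identities into five dual pairs (New A left/right, New B left/right, $P_1/P_3$, $P_2/P_4$, average/inverse average) together with four self-dual ones (Rota-Baxter, Nijenhuis, New identity C, $P_5$), and likewise the six rank-$16$ identities into two pairs and two self-dual ones, roughly halving the casework.

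I expect the main obstacle to be exactly this completeness of the backward direction. The space $W_3$ has dimension above $20$, so $M(R)^t$ is a sizeable matrix with six-parameter polynomial entries, and extracting the precise vanishing conditions on the minor ideals---rather than merely checking that the listed OPIs realize the claimed ranks---is a genuine computer-algebra computation. The human work lies in arranging the elimination so that the pivot polynomials factor into the simple scalar relations among $a,\dots,f$ that cut out the listed families, and in using the symmetry group to cut down the number of irreducible components one must identify by hand.
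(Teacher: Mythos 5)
You should first note what the paper actually does with this statement: it offers \emph{no} proof at all. Lemma~\mref{lem:two} is imported verbatim from Bremner--Elgendy \cite[Theorem~6.12]{BE}, and the present paper's own contribution begins only afterwards (showing the listed OPIs are Gr\"obner--Shirshov). So there is no in-paper argument to compare against; the relevant benchmark is the computation carried out in \cite{BE}. Your outline is a faithful reconstruction of that method --- write the generic degree-$2$, multiplicity-$2$ OPI with coefficients $a,\dots,f$, generate its degree-$3$ consequences by substituting a product into a variable slot and by multiplying by a fresh variable, assemble the matrix of consequences with entries linear in the parameters, and stratify the parameter space by the ranks $16$ and $19$ via minor ideals. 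The symmetry bookkeeping (the $x_1\leftrightarrow x_2$ swap and left/right duality sorting the fourteen rank-$19$ identities into five dual pairs and four self-dual ones) is also correct and matches how the lists are organized.

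The genuine gap is that the proposal never performs any of the computations in which the entire content of the lemma resides. You do not fix a monomial basis of the target space $W_3$ or even compute its dimension; you do not derive a single entry of $M(R)^t$; you do not verify the rank for even one of the twenty listed identities; and you explicitly defer the decisive completeness direction --- that the loci $V(I_{17})\setminus V(I_{16})$ and $V(I_{20})\setminus V(I_{19})$ contain \emph{only} the listed points and one-parameter families --- to an unperformed ``genuine computer-algebra computation.'' A classification theorem of this kind \emph{is} that computation; a correct description of the strategy with the minor-ideal decomposition left open establishes nothing, since a priori the strata could contain extra components. Two further points would need care even in a completed version: rank is only well defined here if $\bfk$ is a field (or one works over $\QQ$ and then argues about specialization --- the paper's standing hypothesis that $\bfk$ is a commutative ring does not suffice), and the New identity B families are positive-dimensional loci inside the rank-$19$ stratum, so your component-identification step must output parametrized families and separately account for the degenerations $d=0$ and $d=-1$, which land in other entries of the lists rather than in New identity B.
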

Notice that the average (resp. inverse average) OPI was named the right (resp. left) average OPI in~\mcite{BE}.

Next we recall two monomial orders $\leqo$~\cite{ZGGS} and $\ordqc$~\cite{QC},
which will be used frequently in the remainder of the paper.
Let $(Z, \leq)$ be a well-ordered set. Extend the well order $\leq$ on $Z$
to the degree lexicographical order $\leq_{{\rm deg-lex}} $ on $M(Z)$.
Further we extend $\leq$ on $\frakM(Z)$. Every $u \in \frakM(Z)$ may be uniquely written as a product in the form
\begin{equation*}
u=u_0 L(\mpu_1) u_1 L(\mpu_2)u_2 \cdots L(\mpu_r) u_r
\end{equation*}
where $u_0,\cdots,u_r \in M(Z)$,  $\mpu_1,\cdots, \mpu_r \in \frakM_{n-1}(Z)$.
Denote by $\deg_L(u)$ the number of occurrence of $L$,
and define the $L$-breadth $\brep(u)$ of $u$ to be $r$.  For example, if $u = x_0 L(x_1) x_2 L(x_3 x_4) x_5$
with $x_0, \cdots, x_5\in Z$, we have $\deg_L(u) = 2$ and $\brep(u) = 2$.
Let $u,v\in \frakM(Z)$ and write them uniquely in the form:
$$u=u_0 L(\mpu_1) u_1 L(\mpu_2)u_2 \cdots L(\mpu_r) u_r\,\text{ and }\, v=v_0 L(\mpv_1) v_1 L(\mpv_2) v_2 \cdots L(\mpv_s) v_s,$$
where $u_0,\cdots,u_r$, $v_0,\cdots,v_s\in M(Z)$
and $\mpu_1,\cdots, \mpu_r, \mpv_1,\cdots,\mpv_s\in \frakM(Z)$.
We define $u\leqo v $ by induction on $\dep(u) + \dep(v)\geq 0$. For the initial step of $\dep(u) + \dep(v) = 0$, we have $u,v\in S(Z)$ and use the degree lexicographical order. For the induction step of $\dep(u) + \dep(v) \geq 1$, we define
\begin{equation*}
u\leqo v \Leftrightarrow
\left\{\begin{array}{ll}
\deg_L(u) < \deg_L(v), \\
\text{ or } \deg_L(u) = \deg_L(v)\, \text{ and }\, \brep(u) < \brep(v),\\
\text{ or } \deg_L(u) = \deg_L(v), \, \brep(u) = \brep(v) (=r)\, \text{ and }\\
 \,(\mpu_1,\cdots,\mpu_r,u_0,\cdots,u_r) \leq (\mpv_1,\cdots, \mpv_r, v_0,\cdots,v_r) \text{ lexicographically.}
\end{array}
\right.
\end{equation*}
Here $\mpu_i\leqo \mpv_i$ and $u_i\leqo v_i$ are compared by the induction hypothesis.
Then $\leqo$ is a monomial order on $\frakM(Z)$~\cite{ZGGS}.

Let $(Z, \leq)$ be a well-ordered set, and let $u=u_1\cdots u_m$ and $v=v_1\cdots v_n$ be in $\mathfrak{S}(Z)$, where $u_i$ and $v_j$ are prime. We define
$u\ordc v $ by induction on $\dep(u)+\dep(v)\geq 0$.
If $\dep(u)+\dep(v) = 0$, we get $u,v\in S(Z)$ and define $u\ordc v$ by
$u <_{\rm deg-lex} v$, that is, $$u \ordc v \, \text{ if }\, ({\rm deg}_Z(u),  u_1, \ldots, u_m) < ({\rm deg}_Z(v), v_1, \ldots, v_n) \, \text{ lexicographically}.$$
Suppose $\dep(u)+\dep(v) \geq 1$. If $u = \lc u'\rc$ and $v = \lc v' \rc$, define
$u\ordc v \,\text{ if }\, u' \ordc v'.$
Otherwise, define
$$u\ordc v \, \text{ if }\, ({\rm deg}_Z(u), |u|,  u_1, \ldots, u_m) <({\rm deg}_Z(v), |v|,  v_1, \ldots, v_n) \, \text{ lexicographically}.$$
Then $\ordqc$ is a monomial order on $\mathfrak{S}(Z)$~\cite{QC}.

As a preparation, we obtain the following result.

\begin{prop}
Let $X$ and $Z$ be sets. Then the following OPIs in $\bfk\frakM(X)$ are respectively Gr\"{o}bner-Shirshov on $Z$
with respect to the monomial orders $\leqo$ or $\ordqc$:
\begin{align}
&L^2(x_1x_2) - L^2(x_1) x_2 - x_1L^2(x_2),  \mlabel{eq:opi1}\\
&L^2(x_1x_2) - L^2(x_1) x_2 , \mlabel{eq:opi2}\\
&L^2(x_1x_2) - x_1L^2(x_2), \mlabel{eq:opi3}
\end{align}
where $x_1, x_2\in X$.
\mlabel{pp:1}
\end{prop}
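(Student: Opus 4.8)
The plan is to verify the Gr\"obner--Shirshov property directly from the definition: for each of the three OPIs $\phi$ separately, I would compute all intersection and including compositions between pairs of elements of the substitution set $S_\Phi(Z)=\{\phi(u,v)\mid u,v\in\frakM(Z)\}$ and show each is trivial. Writing $L^2(w)=\lc\lc w\rc\rc$, the instances are, for example, $\phi(u,v)=\lc\lc uv\rc\rc-\lc\lc u\rc\rc v$ in the case of \eqref{eq:opi2}. The first step is to pin down the leading monomials, and here I would work with $\ordqc$, because that order compares $u$ and $v$ by $(\deg_Z,\ |\cdot|,\ \text{primes})$: all terms of $\phi(u,v)$ share the same $\deg_Z$, while $\lc\lc uv\rc\rc$ always has breadth $1$ and each product term $\lc\lc u\rc\rc v$, $u\lc\lc v\rc\rc$ has breadth $\geq 2$. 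Hence under $\ordqc$ the leading monomial is always a product term: $\lbar{\phi(u,v)}=\lc\lc u\rc\rc v$ for \eqref{eq:opi2}, $u\lc\lc v\rc\rc$ for \eqref{eq:opi3}, and the larger of the two for \eqref{eq:opi1}. Each resulting rule pulls an outside factor into the double bracket, e.g. $\lc\lc u\rc\rc v\mapsto\lc\lc uv\rc\rc$. For \eqref{eq:opi1} the two possible orientations, separated by whether $|u|<|v|$ or $|u|>|v|$, must both be recorded, the tie $|u|=|v|$ being settled by the prime-sequence comparison.

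Next I would enumerate the composition shapes for a pair $f=\phi(u_1,u_2)$, $g=\phi(v_1,v_2)$. Since every leading monomial has the form $\lc\lc a\rc\rc b$ (one leading double bracket followed by a word), only a few configurations occur: including compositions $\lbar f=q\suba{\lbar g}$, where $\lbar g$ sits either nested inside an argument of $f$ (within the double bracket, i.e. inside $u_1$, or inside a bracketed prime of $u_2$) or as an initial segment of $\lbar f$ at the top level; and intersection compositions $w=\lbar f\,u=v\,\lbar g$, where a proper suffix of $\lbar f$ overlaps a proper prefix of $\lbar g$, which forces the shared prime to be a double bracket $\lc\lc v_1\rc\rc$ appearing among the prime factors of $u_2$. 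In each shape the word $w$ and the data $q\in\frakM^{\star}(Z)$ (or the multipliers $u,v$) are then determined.

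For each configuration I would compute $(f,g)_w$ and reduce it. The uniform feature is that the two cross terms produced by the leading monomials cancel, leaving the difference of two bracketed words that differ only by (a) whether an outer factor has been absorbed into a double bracket, or (b) whether an inner copy of $\lbar g$ has been rewritten using $g$. Applying the appropriate instances of $\phi$---either pulling the remaining factor in, or reducing the inner subword by $g$---rewrites both words to one common word, giving $(f,g)_w\equiv 0\ \mod(S,w)$. At every step I would check that the intermediate leading monomials are $\ordc w$; this holds because each rule application strictly lowers the breadth while fixing $\deg_Z$, together with the monotonicity property \eqref{eq:morder}. The structural fact making this work is a local confluence: absorbing an outer factor into a bracket commutes with reducing a subword nested inside it, so the two reduction paths converge to the same normal form.

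The main obstacle is the bookkeeping for the nested including compositions, in which $\lbar g$ lies inside the double bracket of $f$ while an outer factor of $f$ is simultaneously absorbed: one must show that ``absorb first, then reduce inside'' and ``reduce inside first, then absorb'' agree, and verify the order inequalities through the nesting. The three-term identity \eqref{eq:opi1} is the most laborious, since its leading monomial is the larger of two product terms and both orientations spawn their own families of compositions, so I expect most of the case analysis to concentrate there, even though each individual case still collapses by the same cancel-and-absorb mechanism. Finally, for those OPIs where $\leqo$ also renders a product term leading one obtains the analogous statement for $\leqo$, which accounts for the two orders named in the proposition.
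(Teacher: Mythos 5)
Your proposal diverges from the paper's proof at the very first step, and the divergence matters. The paper's entire argument rests on the claim that, for every substitution instance, the leading monomial of $L^2(uv)-L^2(u)v-uL^2(v)$ (and of the two-term variants) is the bracket term $L^2(uv)$; since that monomial has breadth one, there are then no intersection compositions and only two including compositions, which the paper checks in a few lines. You orient the rules the opposite way because you read the definition of $\ordqc$ literally, and on that reading you are right: all monomials of an instance share the same $\deg_Z$, while $L^2(uv)$ has breadth $1$ and $L^2(u)v$, $uL^2(v)$ have breadth at least $2$, so under $\ordqc$ as stated a product term is always the largest. The same phenomenon occurs under $\leqo$ whenever the outer factor contains a top-level bracket, e.g. $L^2(uL(z)) \odb L^2(u)L(z)$ because $\brep\big(L^2(u)L(z)\big)=2>1=\brep\big(L^2(uL(z))\big)$; indeed the paper itself concedes in Remark~\mref{re:two} that breadth-two monomials dominate under $\leqo$ and $\ordqc$. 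The orientation the paper verifies is the one that actually holds for $\leq_{\rm dt}$ (the order it uses in Proposition~\mref{pp:5}), and its composition computations are correct under that order. So your absorb-oriented rewriting system (rules such as $L^2(u)v \mapsto L^2(uv)$), with intersection compositions and heavy case analysis, is a genuinely different route from the paper's two-case check --- and it is the route forced by the stated definitions, which exposes a real inconsistency between the proposition's choice of orders and the paper's own proof.

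That said, your proposal has two genuine gaps. First, your closing sentence about $\leqo$ cannot be repaired as stated: under $\leqo$, whether a product term leads is not a property of the OPI but of the individual instance, since $\brep\big(L^2(u)v\big)=1+\brep(v)$; for bracket-free $v$ the lexicographic step makes $L^2(uv)$ the leader, while for $v=L(z)$ the inequality reverses. Hence the substitution set is mixed-oriented under $\leqo$, no uniform rewriting system (expand or absorb) exists, and your machinery does not transfer --- at best you prove the $\ordqc$ half of the proposition. Second, even for $\ordqc$ the actual mathematical content, namely the triviality of every composition, is asserted rather than established. For Eq.~\meqref{eq:opi1} the leading monomial is the larger of $L^2(u)v$ and $uL^2(v)$, decided by comparing $|u|$ with $|v|$ and then by lexicographic tie-breaks, so $S$ contains instances with two different shapes of leading monomials; compositions mix the two shapes, and the auxiliary instances used in a reduction can themselves switch orientation as the breadths change. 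The cancel-and-absorb mechanism does close the cases one tests by hand (your confluence heuristic is sound there), but in a Gr\"obner--Shirshov argument these verifications \emph{are} the proof, and your plan leaves them uncarried out; a smaller point in the same direction is that each instance must be negated to make $S$ monic, since under your orientation the leading coefficients are $-1$.
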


\begin{proof}
We first consider Eq.~(\mref{eq:opi1}). Denote by
$$S = \{ L^2(x y) - L^2(x) y - xL^2(y)\mid x,y\in \frakM(Z)\}.$$
With respect to the monomial orders $\leqo$ or $\ordqc$, the leading monomial of $L^2(x y) - L^2(x) y - xL^2(y)$ is $L^2(x y)$.
Since the breadth of the leading monomial $L^2(x y)$ is $\bre(L^2(xy)) = 1$,
there are no intersection ambiguities. Further there are two cases of including ambiguities to consider.

\noindent{\bf Case 1.} We have
\begin{align*}
w =&\ \lbar{f} = L^2(p|_{L^2(xy)} z) = q|_{\lbar{g}}, \text{ where } x,y,z\in \frakM(Z), p,q\in \frakM^\star(Z), q= L^2(pz)\text{ and }\\
f =&\ L^2(p|_{L^2(xy)} z) - L^2(p|_{L^2(xy)}) z -  p|_{L^2(xy)} L^2(z), \\
g=&\ L^2(xy) - L^2(x) y - xL^2(y).
\end{align*}
The corresponding including composition is trivial mod $(S, w)$:
\begin{align*}
f - q|_g =&\ - L^2(p|_{L^2(xy)}) z -  p|_{L^2(xy)} L^2(z) + L^2(p|_{L^2(x)y} z) + L^2(p|_{xL^2(y)} z)\\
\equiv &\ - L^2(p|_{L^2(x)y}) z - L^2(p|_{xL^2(y)}) z -  p|_{L^2(x)y} L^2(z)  -  p|_{xL^2(y)} L^2(z)\\
&\ + L^2(p|_{L^2(x)y}) z + p|_{L^2(x)y} L^2(z) + L^2(p|_{xL^2(y)}) z + p|_{xL^2(y)} L^2(z)\\
\equiv&\ 0.
\end{align*}

\noindent{\bf Case 2.} We have
\begin{align*}
w =&\ \lbar{f} = L^2(x p|_{L^2(yz)}) = q|_{\lbar{g}}, \text{ where } x,y,z\in \frakM(Z), p,q\in \frakM^\star(Z), q= L^2(xp)\text{ and }\\
f =&\ L^2(x p|_{L^2(yz)}) - L^2(x) p|_{L^2(yz)} -  x L^2(p|_{L^2(yz)}), \\
g=&\ L^2(yz) - L^2(y) z - yL^2(z).
\end{align*}
Then the including composition of this ambiguity is trivial mod $(S, w)$:
\begin{align*}
f - q|_g =&\ - L^2(x) p|_{L^2(yz)} -  x L^2(p|_{L^2(yz)}) + L^2(x p|_{L^2(y)z}) +  L^2(x p|_{yL^2(z)})\\
\equiv &\   - L^2(x) p|_{L^2(y)z}  - L^2(x) p|_{yL^2(z)} -  x L^2(p|_{L^2(y)z}) -  x L^2(p|_{yL^2(z)})\\
 &\ + L^2(x) p|_{L^2(y)z} + x L^2(p|_{L^2(y)z}) +  L^2(x) p|_{yL^2(z)}+  x L^2(p|_{yL^2(z)})\\
\equiv&\ 0.
\end{align*}
So the OPI in Eq.~(\mref{eq:opi1}) is Gr\"{o}bner-Shirshov.

Next we consider Eq.~(\mref{eq:opi2}). Let
$$S = \{ L^2(x y) - L^2(x) y \mid x,y\in \frakM(Z)\}.$$
With a similar argument to the case of Eq.~(\mref{eq:opi1}), there are two cases to consider.

\noindent{\bf Case 3.} We have
\begin{align*}
w =&\ \lbar{f} = L^2(p|_{L^2(xy)} z) = q|_{\lbar{g}}, \text{ where } x,y,z\in \frakM(Z), p,q\in \frakM^\star(Z), q= L^2(pz)\text{ and }\\
f =&\ L^2(p|_{L^2(xy)} z) - L^2(p|_{L^2(xy)}) z , \\
g=&\ L^2(xy) - L^2(x) y .
\end{align*}
The corresponding including composition is trivial mod $(S, w)$:
\begin{align*}
f - q|_g =&\ - L^2(p|_{L^2(xy)}) z  + L^2(p|_{L^2(x)y} z) \\
\equiv &\ - L^2(p|_{L^2(x)y}) z  + L^2(p|_{L^2(x)y}) z \\
\equiv&\ 0.
\end{align*}

\noindent{\bf Case 4.} We have
\begin{align*}
w =&\ \lbar{f} = L^2(x p|_{L^2(yz)}) = q|_{\lbar{g}}, \text{ where } x,y,z\in \frakM(Z), p,q\in \frakM^\star(Z), q= L^2(xp)\text{ and }\\
f =&\ L^2(x p|_{L^2(yz)}) - L^2(x) p|_{L^2(yz)} , \\
g=&\ L^2(yz) - L^2(y) z.
\end{align*}
In this case, the corresponding including composition is trivial mod $(S, w)$ as follows:
\begin{align*}
f - q|_g =&\ - L^2(x) p|_{L^2(yz)}  + L^2(x p|_{L^2(y)z}) \\
\equiv &\   - L^2(x) p|_{L^2(y)z} + L^2(x) p|_{L^2(y)z} \\
\equiv&\ 0.
\end{align*}
So the OPI in Eq.~(\mref{eq:opi2}) is Gr\"{o}bner-Shirshov.
The proof of the OPI in Eq.~(\mref{eq:opi3}) is similar to the one in Eq.~(\mref{eq:opi2}).
\end{proof}

\begin{remark}
If we consider the case involving the unity $\bfone$, then the leading monomial of the OPI
$$L^2(x_1x_2) - L^2(x_1) x_2 - x_1L^2(x_2)$$
in Eq.~(\mref{eq:opi1}) is not necessary $L^2(x_1x_2)$. For example, if $x_2 = \bfone$, then
Eq.~(\mref{eq:opi1}) is $-x_1L^2(\bfone)$ whose leading monomial is not $L^2(x_1x_2) = L^2(x_1)$.
\mlabel{re:one}
\end{remark}

Next we turn to consider the new identity A.

\begin{prop}
Let $X$ and $Z$ be sets.
The following OPIs in $\bfk\frakM(X)$ are respectively Gr\"{o}bner-Shirshov on $Z$ with respect to the monomial orders $\leqo$ or $\ordqc$:
\begin{align}
&L^2(x_1x_2) +L(x_1L(x_2)) + x_1L^2(x_2) \quad\quad\text{(New identity A (right))},  \mlabel{eq:opi7}\\
& L^2(x_1x_2) +L(L(x_1)x_2) + L^2(x_1) x_2 \quad\quad\text{(New identity A (left))}, \mlabel{eq:opi8}
\end{align}
where $x_1, x_2\in X$.
\mlabel{pp:3}
\end{prop}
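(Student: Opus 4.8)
The plan is to prove Eq.~\meqref{eq:opi7} (New identity A (right)) in close parallel with Proposition~\mref{pp:1}, and then to obtain Eq.~\meqref{eq:opi8} (the left version) by the left--right mirror of the same computation. It suffices to establish the Gr\"obner--Shirshov property for one of the two orders named in the statement, and $\leqo$ is the convenient choice here, so fixing $\leqo$ I would set
$$S = \{\, L^2(xy) + L(xL(y)) + xL^2(y) \mid x,y\in \frakM(Z)\,\},$$
which is exactly $S_\Phi(Z)$ for the right identity. The first step is to identify the leading monomial of a generator. The three monomials $L^2(xy)$, $L(xL(y))$ and $xL^2(y)$ all have $\deg_L = 2$ and $L$-breadth $1$, so $\leqo$ separates them only through its lexicographic clause, that is, through the inner words $L(xy)$, $xL(y)$ and $L(y)$. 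By the induction hypothesis $L(xy)$ is the largest of these, since its own argument $xy$ has the greater $Z$-degree, and hence $\lbar{g} = L^2(xy)$ for every generator $g\in S$, just as in Proposition~\mref{pp:1}.

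Since $\bre(L^2(xy)) = 1$, there can be no intersection composition, and it remains to treat the including compositions. Because $\lbar{g} = L^2(xy)$ is a single bracketed prime, an including ambiguity $w = \lbar{f} = q|_{\lbar{g}}$ occurs exactly when a second copy of $L^2(xy)$ sits inside the argument of the outer $L^2$; splitting that argument on either side of the nested copy gives the two cases
$$w = L^2\big(p|_{L^2(xy)}\, z\big) \qquad\text{and}\qquad w = L^2\big(x\, p|_{L^2(yz)}\big),$$
with $p,q \in \frakM^{\star}(Z)$ and the evident choices of $f$ and $g$, entirely parallel to Cases 1 and 2 in the proof of Proposition~\mref{pp:1}.

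For each case I would form $f - q|_g$ and reduce it modulo $(S,w)$ by the rewriting rule $L^2(uv) \equiv -L(uL(v)) - uL^2(v)$. After subtracting $q|_g$, the difference consists of the mixed-bracket and product tails of $f$ together with two terms of the shape $L^2(p|_{\,\cdot\,}\,z)$; rewriting those two terms, and simultaneously rewriting the copy of $L^2(xy)$ still buried in $p|_{L^2(xy)}$, produces two identical collections of monomials with opposite signs, so the total collapses to $0$. This is the same telescoping cancellation as in Proposition~\mref{pp:1}, the only new feature being that the extra summand $L(uL(v))$ is carried along symmetrically and cancels in pairs.

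The left identity Eq.~\meqref{eq:opi8} is then disposed of by the mirror computation: the leading monomial is again $L^2(x_1x_2)$, there is no intersection composition, and the two including compositions are trivial under the reduction $L^2(uv) \equiv -L(L(u)v) - L^2(u)v$. I expect the single genuinely new point, compared with Proposition~\mref{pp:1}, to be the leading-monomial determination, since one must verify that the mixed term $L(x_1L(x_2))$ does not exceed $L^2(x_1x_2)$ under $\leqo$; once the leading monomial is fixed, the composition checks are a routine, sign-tracked extension of the earlier argument, and I do not anticipate any real obstacle there.
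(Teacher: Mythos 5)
Your proposal follows the paper's proof of Proposition~\mref{pp:3} essentially step for step: the same substitution set $S$, the same claim that $\lbar{g}=L^2(xy)$ for every generator, the same observation that a breadth-one leading monomial excludes intersection compositions, the same two including ambiguities $L^2(p|_{L^2(xy)}z)$ and $L^2(xp|_{L^2(yz)})$, the same telescoping cancellation, and the same disposal of Eq.~\meqref{eq:opi8} by left--right symmetry. The cancellation you sketch is exactly the computation the paper writes out in full, so that part is fine (the paper also handles both orders at once, since its computation depends only on the identification of the leading monomial, whereas you restrict to $\leqo$).

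The genuine problem is in the one step you work out that the paper only asserts, and which you yourself single out as ``the single genuinely new point'': the determination of the leading monomial. You claim that $L^2(xy)$, $L(xL(y))$ and $xL^2(y)$ all have $L$-breadth $1$, so that $\leqo$ decides among them lexicographically. That is true only when $x$ has no top-level bracket factors: in general $\brep\bigl(xL^2(y)\bigr)=\brep(x)+1$, and likewise the inner word $xL(y)$ has $L$-breadth $\brep(x)+1$. Since $S$ ranges over \emph{all} $x,y\in\frakM(Z)$, this matters. Concretely, take $x=L(a)$, $y=b$ with $a,b\in Z$: then $xL^2(y)=L(a)L^2(b)$ has $\deg_L=3$ and $\brep=2$, while $L^2(xy)=L^2(L(a)b)$ has $\deg_L=3$ and $\brep=1$, so under $\leqo$ (where, with $\deg_L$ tied, larger $L$-breadth wins --- this is exactly what makes $L(x)L(y)$ the leading term in the Rota--Baxter case) the largest monomial of this generator is $xL^2(y)$, not $L^2(xy)$; indeed for this substitution $L^2(xy)$ is the \emph{smallest} of the three terms. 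With a breadth-$\geq 2$ leading monomial, your subsequent analysis collapses: intersection compositions are no longer excluded, and the list of including ambiguities is different. The paper states the leading monomial without justification, so your conclusion is consistent with the paper's premise, but the argument you offer for it does not hold over all of $S$, and repairing it is not a routine matter of sign-tracking.
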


\begin{proof}
We only prove Eq.~({\mref{eq:opi7}}), as the case of Eq.~({\mref{eq:opi8}}) is a similar one.
Let $$S = \{ L^2(xy) +L(xL(y)) + xL^2(y) \mid x,y\in \frakM(Z)\}.$$
Notice that the leading monomial of $L^2(xy) +L(xL(y)) + xL^2(y)$ is $L^2(xy)$ with respect to the monomial orders $\leqo$ or $\ordqc$.
There are no intersection compositions, and there are two including compositions.

\noindent{\bf Case 1.} The ambiguity is
\begin{align*}
w =&\ \lbar{f} = L^2(p|_{L^2(xy)} z) = q|_{\lbar{g}}, \text{ where } x,y,z\in \frakM(Z), p,q\in \frakM^\star(Z), q= L^2(pz)\text{ and }\\
f =&\ L^2(p|_{L^2(xy)} z) + L(p|_{L^2(xy)} L(z)) + p|_{L^2(xy)} L^2(z), \\
g=&\ L^2(xy) +L(xL(y)) + xL^2(y),
\end{align*}
whose including composition is trivial mod $(S, w)$ as follows:
\begin{align*}
f - q|_g =&\ L(p|_{L^2(xy)} L(z)) + p|_{L^2(xy)} L^2(z) - L^2(p|_{L(xL(y))} z) - L^2(p|_{xL^2(y)} z)\\
\equiv&\ -L(p|_{L(xL(y))} L(z)) -L(p|_{xL^2(y)} L(z)) -  p|_{L(xL(y))} L^2(z) -  p|_{xL^2(y)} L^2(z)\\
&\ + L(p|_{L(xL(y))} L(z)) + p|_{L(xL(y))} L^2(z) + L(p|_{xL^2(y)} L(z)) + p|_{xL^2(y)} L^2(z)\\
\equiv&\ 0.
\end{align*}

\noindent{\bf Case 2.} The ambiguity is
\begin{align*}
w =&\ \lbar{f} = L^2(x p|_{L^2(yz)}) = q|_{\lbar{g}}, \text{ where } x,y,z\in \frakM(Z), p,q\in \frakM^\star(Z), q= L^2(xp)\text{ and }\\
f =&\ L^2(x p|_{L^2(yz)}) + L(x L(p|_{L^2(yz)})) + x L^2(p|_{L^2(yz)}), \\
g=&\ L^2(yz) +L(yL(z)) + yL^2(z).
\end{align*}
Then the including composition is trivial mod $(S, w)$ as follows:
\begin{align*}
f - q|_g =&\ L(x L(p|_{L^2(yz)})) + x L^2(p|_{L^2(yz)}) - L^2(x p|_{L(yL(z))}) - L^2(x p|_{yL^2(z)})\\
\equiv&\ -L(x L(p|_{L(yL(z))})) - L(x L(p|_{yL^2(z)})) - x L^2(p|_{L(yL(z))}) - x L^2(p|_{yL^2(z)})\\
&\ + L(x L(p|_{L(yL(z))})) + x L^2(p|_{L(yL(z))}) + L(x L(p|_{yL^2(z)})) + x L^2(p|_{yL^2(z)})\\
\equiv&\ 0,
\end{align*}
as needed.
\end{proof}

The following focuses on the new identity B.

\begin{prop}
Let $X$ and $Z$ be sets. The following OPIs in $\bfk\frakM(X)$ are respectively Gr\"{o}bner-Shirshov on $Z$
with respect to the monomial orders $\leqo$ or $\ordqc$:
\begin{align}
&L^2(x_1x_2) + dL(x_1L(x_2))  - (d+1)x_1L^2(x_2) \,\text{ with } d\in \bfk\setminus\{0\} \quad\quad\text{(New identity B (right))},  \mlabel{eq:opi9}\\
& L^2(x_1x_2) + b L(L(x_1)x_2) -(b+1) L^2(x_1) x_2 \,\text{ with } b\in \bfk\setminus\{0\} \quad\quad\text{(New identity B (left))}, \mlabel{eq:opi10}
\end{align}
where $x_1, x_2\in X$.
\mlabel{pp:4}
\end{prop}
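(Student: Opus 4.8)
The plan is to follow the proof of Proposition~\ref{pp:3} almost verbatim, the one genuine change being the bookkeeping of the two weights $d$ and $-(d+1)$ in place of the uniform $+1$ of New identity~A. By the left--right symmetry that interchanges $x_1L(x_2)\leftrightarrow L(x_1)x_2$ and $x_1L^2(x_2)\leftrightarrow L^2(x_1)x_2$, it suffices to treat the right identity Eq.~\eqref{eq:opi9}; the left identity Eq.~\eqref{eq:opi10} then follows by the mirror argument. So I would set
$$S=\{\,L^2(xy)+dL(xL(y))-(d+1)xL^2(y)\mid x,y\in\frakM(Z)\,\}.$$
With respect to $\leqo$ or $\ordqc$, each of the three monomials of a generator has $\deg_L=2$ and $\brep=1$, and a lexicographic comparison of the associated tuples gives $L(xy)$ as the largest first entry, so the leading monomial is $L^2(xy)$, with reduction rule $L^2(xy)\mapsto -dL(xL(y))+(d+1)xL^2(y)$. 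Since $\bre(L^2(xy))=1$, there are no intersection compositions, and only the self-including compositions remain.

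Next I would list the including ambiguities. Exactly as in Proposition~\ref{pp:3} there are two, according to whether the inner copy of the leading monomial lies in the left or the right argument of the outer $L^2$:
$$
w=L^2(p|_{L^2(xy)}\,z)\ \ (\text{Case 1}),\qquad w=L^2(x\,p|_{L^2(yz)})\ \ (\text{Case 2}),
$$
where $x,y,z\in\frakM(Z)$, $p\in\frakM^{\star}(Z)$, and $q=L^2(p\,z)$ resp. $q=L^2(x\,p)$. For each case I would form $f-q|_g$ and reduce it modulo $S$ to below $w$. The telescoping mechanism is the same as for identity~A: every surviving monomial $L^2(AB)$ admits two $S$-reductions---one rewriting the inner $L^2$ via the reduction rule inside the context $p$, the other rewriting the outer $L^2(AB)$ itself---and these two rewritings generate the same four families of monomials, namely (in Case~1) $L(p|_{L(xL(y))}L(z))$, $L(p|_{xL^2(y)}L(z))$, $p|_{L(xL(y))}L^2(z)$, $p|_{xL^2(y)}L^2(z)$, with opposite signs, so that $f-q|_g\equiv 0\ \mod(S,w)$.

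The step I expect to carry the actual work---and the only place the new weights intervene---is checking that the coefficients cancel after these reductions. Writing a generator schematically as $L^2(xy)+\beta L(xL(y))+\gamma xL^2(y)$ with leading term $L^2(xy)$, expanding the inner reduction contributes the four families above with coefficients $-\beta^2,-\beta\gamma,-\beta\gamma,-\gamma^2$, while expanding the outer reduction contributes them with the opposite coefficients $\beta^2,\beta\gamma,\beta\gamma,\gamma^2$; hence the cancellation is automatic for \emph{every} pair $(\beta,\gamma)$, and in particular for $(\beta,\gamma)=(d,-(d+1))$. The hypothesis $d\neq 0$ plays no role in this composition computation---it only keeps the identity distinct from Eq.~\eqref{eq:opi3}---so once the two displays are carried out with the weights $d$ and $-(d+1)$ and the termwise cancellation is confirmed, the right identity is Gr\"obner--Shirshov, and Eq.~\eqref{eq:opi10} follows by the mirror substitution $x L(y)\mapsto L(x)y$, $xL^2(y)\mapsto L^2(x)y$.
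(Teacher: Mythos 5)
Your proposal is correct and takes essentially the same approach as the paper's proof: the same set $S$, the same leading monomial $L^2(xy)$, the absence of intersection compositions, the same two including ambiguities with $q=L^2(pz)$ and $q=L^2(xp)$, and the same telescoping cancellation, where your schematic coefficients $(\beta,\gamma)=(d,-(d+1))$ reproduce exactly the terms $\pm d^{2}$, $\pm d(d+1)$, $\pm(d+1)^{2}$ in the paper's displayed computation. Your added observation that the cancellation holds for every pair $(\beta,\gamma)$, so that the hypothesis $d\neq 0$ plays no role in the composition check, is accurate and consistent with the paper.
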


\begin{proof}
By symmetry, it is enough to prove Eq.~({\mref{eq:opi9}}).
Let
$$S = \{L^2(x y) + dL(xL(y))  - (d+1)xL^2(y)\mid x,y\in \frakM(Z)\}.$$
The leading monomial of $L^2(x y) + dL(xL(y))  - (d+1)xL^2(y)$ is $L^2(x y)$ with respect to $\leqo$ or $\ordqc$.
There are no intersection compositions, and there are two including compositions.

\noindent{\bf Case 1.} The ambiguity is
\begin{align*}
w =&\ \lbar{f} = L^2(p|_{L^2(xy)} z) = q|_{\lbar{g}}, \text{ where } x,y,z\in \frakM(Z), p,q\in \frakM^\star(Z), q= L^2(pz)\text{ and }\\
f =&\ L^2(p|_{L^2(xy)} z) + dL(p|_{L^2(xy)} L(z)) -(d+1) p|_{L^2(xy)} L^2(z), \\
g=&\ L^2(xy) + d L(xL(y)) - (d+1) xL^2(y),
\end{align*}
whose including composition is trivial mod $(S, w)$ as follows:
\begin{align*}
f - q|_g =&\ d L(p|_{L^2(xy)} L(z)) - (d+1) p|_{L^2(xy)} L^2(z) - d L^2(p|_{L(xL(y))} z) + (d+1) L^2(p|_{xL^2(y)} z)\\
\equiv&\ -d^2L(p|_{L(xL(y))} L(z)) +d(d+1) L(p|_{xL^2(y)} L(z)) + d(d+1)  p|_{L(xL(y))} L^2(z) - (d+1)^2  p|_{xL^2(y)} L^2(z)\\
&\ + d^2 L(p|_{L(xL(y))} L(z)) - d(d+1) p|_{L(xL(y))} L^2(z) - d(d+1) L(p|_{xL^2(y)} L(z)) + (d+1)^2 p|_{xL^2(y)} L^2(z)\\
\equiv&\ 0.
\end{align*}

\noindent{\bf Case 2.} The ambiguity is
\begin{align*}
w =&\ \lbar{f} = L^2(x p|_{L^2(yz)}) = q|_{\lbar{g}}, \text{ where } x,y,z\in \frakM(Z), p,q\in \frakM^\star(Z), q= L^2(xp)\text{ and }\\
f =&\ L^2(x p|_{L^2(yz)}) + d L(x L(p|_{L^2(yz)})) -(d+1)x L^2(p|_{L^2(yz)}), \\
g=&\ L^2(yz) + d L(yL(z)) -(d+1) yL^2(z).
\end{align*}
The including composition is trivial mod $(S, w)$:
\begin{align*}
f - q|_g =&\ d L(x L(p|_{L^2(yz)})) -(d+1) x L^2(p|_{L^2(yz)}) - d L^2(x p|_{L(yL(z))}) +(d+1) L^2(x p|_{yL^2(z)})\\
\equiv&\ -d^2 L(x L(p|_{L(yL(z))})) +d(d+1) L(x L(p|_{yL^2(z)})) +d(d+1) x L^2(p|_{L(yL(z))}) - (d+1)^2 x L^2(p|_{yL^2(z)})\\
&\ + d^2 L(x L(p|_{L(yL(z))})) -d(d+1) x L^2(p|_{L(yL(z))}) -d(d+1) L(x L(p|_{yL^2(z)})) + (d+1)^2 x L^2(p|_{yL^2(z)})\\
\equiv&\ 0,
\end{align*}
as required.
\end{proof}

Now we are in a position to consider the new identity C.

\begin{prop}
Let $X$ and $Z$ be sets.
The following OPI in $\bfk\frakM(X)$ is Gr\"{o}bner-Shirshov on $Z$
with respect to the monomial order $\leq_{{\rm dt}}$:
\begin{align}
L^2(x_1x_2) + L^2(x_1)x_2 + x_1L^2(x_2) + 2L(x_1)L(x_2) - 2 L(L(x_1)x_2) - 2L(x_1L(x_2))\quad\quad\text{(New identity C)}.
\mlabel{eq:C}
\end{align}
where $x_1, x_2\in X$.
\mlabel{pp:5}
\end{prop}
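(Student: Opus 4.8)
The plan is to follow the strategy of Propositions~\mref{pp:1}, \mref{pp:3} and~\mref{pp:4} essentially verbatim; the only genuinely new feature is that the defining OPI now has six terms instead of three, so the cancellations involve more summands. Put
$$S = \{L^2(xy) + L^2(x)y + xL^2(y) + 2L(x)L(y) - 2L(L(x)y) - 2L(xL(y)) \mid x,y\in\frakM(Z)\}.$$
The first task is to check that, with respect to $\leq_{{\rm dt}}$, the leading monomial of a generator is $L^2(xy)$. This is exactly where $\leq_{{\rm dt}}$ must be used in place of $\leqo$ or $\ordqc$: among the six terms only $L(x)L(y)$ has $L$-breadth (and ordinary breadth) equal to $2$, all others being $1$, so under $\leqo$, and likewise under $\ordqc$ where breadth is compared right after degree, the monomial $L(x)L(y)$ would be the largest and the leading term would no longer be $L^2(xy)$. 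Under $\leq_{{\rm dt}}$ one instead compares prime sequences, and since $L^2(xy)=\lc\lc xy\rc\rc$ dominates every other term already on its first prime, it is the leading monomial, with $\bre(L^2(xy))=1$.

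Because the leading monomial has breadth $1$, there are no intersection compositions, and the including compositions reduce, just as in Propositions~\mref{pp:3} and~\mref{pp:4}, to the two cases
$$w = L^2(p|_{L^2(xy)}\,z)\ \text{ with }\ q = L^2(pz), \qquad\text{and}\qquad w = L^2(x\,p|_{L^2(yz)})\ \text{ with }\ q = L^2(xp),$$
according to whether the inner copy of $L^2(xy)$ lies in the left or the right argument of the outer $L^2$. Writing $u := p|_{L^2(xy)}$ in the first case and
$$R(x,y) := -L^2(x)y - xL^2(y) - 2L(x)L(y) + 2L(L(x)y) + 2L(xL(y))$$
for the reduction of $L^2(xy)$ modulo $S$, I would form $f - q|_g$; the two copies of $w$ cancel, leaving the five lower terms of $f$, which together are exactly $-R(u,z)$, against the five terms of $q|_g$, which are $\sum_k c_k L^2(p|_{m_k}z)$ over the five monomials $m_k$ of $R(x,y)$ with their coefficients $c_k$. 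Reducing the inner $L^2(xy)$ inside $u$ turns $-R(u,z)$ into $-\sum_k c_k R(p|_{m_k},z)$, while each outer $L^2(p|_{m_k}z)$ is again of the form $L^2(AB)$ and reduces to $R(p|_{m_k},z)$, turning the second group into $+\sum_k c_k R(p|_{m_k},z)$; the two sums cancel termwise and $f - q|_g \equiv 0 \mod(S,w)$. The second case is symmetric.

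The step I expect to be the main obstacle is precisely this termwise matching: with six summands each surviving monomial expands into five pieces under reduction, so one must carry all of them together with the signs $c_1=c_2=-1$, $c_3=-2$, $c_4=c_5=2$ and verify that nothing is left over. Alongside this one must confirm that every monomial created during the reduction is strictly below $w$, which is automatic since each rewrite replaces the leading monomial $L^2(\cdot\,\cdot)$ of its defining relation by strictly smaller monomials and $\leq_{{\rm dt}}$ is a monomial order. Once both including compositions are shown trivial, $S$ is a Gr\"obner--Shirshov basis and the OPI in Eq.~\meqref{eq:C} is Gr\"obner--Shirshov on $Z$ with respect to $\leq_{{\rm dt}}$.
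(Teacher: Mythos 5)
Your proposal is correct and takes essentially the same route as the paper's proof: the same set $S$, the same identification of $L^2(xy)$ as the leading monomial under $\leq_{{\rm dt}}$, the same observation that breadth one rules out intersection compositions leaving exactly the two including compositions with $q=L^2(pz)$ and $q=L^2(xp)$, and the same reduce-and-cancel verification, which you merely compress into the abbreviation $R(x,y)$ where the paper writes out every term explicitly. One parenthetical slip worth fixing: $L^2(x)y$ and $xL^2(y)$ also have ordinary breadth $2$, so your stated reason for why $\ordqc$ is unsuitable is not quite right as written; this affects only the side remark on why $\leq_{{\rm dt}}$ is the order of choice (cf.\ Remark~\mref{re:two}), not the proof itself.
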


\begin{proof}
Let $$S = \{L^2(x y) + L^2(x)y + xL^2(y) + 2L(x)L(y) - 2 L(L(x)y) - 2L(xL(y)) \mid x,y\in \frakM(Z)\}.$$
With respect to $\leq_{{\rm dt}}$, the leading monomial of $$L^2(x y) + L^2(x)y + xL^2(y) + 2L(x)L(y) - 2 L(L(x)y) - 2L(xL(y))$$
is $L^2(x y)$. Further there are no intersection compositions, and there are two including compositions.

\noindent{\bf Case 1.} The ambiguity is of the form
\begin{align*}
w =&\ \lbar{f} = L^2(p|_{L^2(xy)} z) = q|_{\lbar{g}}, \text{ where } x,y,z\in \frakM(Z), p,q\in \frakM^\star(Z), q= L^2(pz)\text{ and }\\
f =&\ L^2(p|_{L^2(xy)} z) + L^2(p|_{L^2(xy)}) z + p|_{L^2(xy)} L^2(z) +  2 L(p|_{L^2(xy)})L(z) -  2 L(L(p|_{L^2(xy)})z) - 2 L(p|_{L^2(xy)}L(z)), \\
g=&\ L^2(xy) + L^2(x)y + xL^2(y) + 2L(x)L(y) - 2 L(L(x)y) - 2L(xL(y)).
\end{align*}
Then the including intersection is trivial mod $(S, w)$:
\begin{align*}
f - q|_g =&\  L^2(p|_{L^2(xy)}) z + p|_{L^2(xy)} L^2(z) +  2 L(p|_{L^2(xy)})L(z) -  2 L(L(p|_{L^2(xy)})z) - 2 L(p|_{L^2(xy)}L(z))\\
&\ - L^2(p|_{L^2(x)y} z) - L^2(p|_{xL^2(y)} z) -  2 L^2(p|_{L(x)L(y)} z) + 2 L^2(p|_{L(L(x)y)} z) + 2 L^2(p|_{L(xL(y))} z)\\
\equiv&\ - L^2(p|_{L^2(x)y}) z - L^2(p|_{x L^2(y)}) z - 2 L^2(p|_{L(x)L(y)}) z + 2 L^2(p|_{L(L(x)y)}) z + 2 L^2(p|_{L(xL(y))}) z\\
&\ -p|_{L^2(x)y} L^2(z) - p|_{x L^2(y)} L^2(z) - 2 p|_{L(x)L(y)} L^2(z) + 2 p|_{L(L(x)y)} L^2(z) + 2 p|_{L(xL(y))} L^2(z)\\
&\ -2 L(p|_{L^2(x)y})L(z) -2 L(p|_{x L^2(y)})L(z) -4 L(p|_{L(x)L(y)})L(z) + 4 L(p|_{L(L(x)y)})L(z) + 4 L(p|_{L(xL(y))})L(z)\\
&\ +2 L(L(p|_{L^2(x)y})z) + 2 L(L(p|_{x L^2(y)})z) + 4 L(L(p|_{L(x)L(y)})z) - 4 L(L(p|_{L(L(x)y)})z) - 4 L(L(p|_{L(xL(y))})z)\\
&\ +2 L(p|_{L^2(x)y}L(z)) + 2 L(p|_{x L^2(y)}L(z)) + 4 L(p|_{L(x)L(y)}L(z)) - 4 L(p|_{L(L(x)y)}L(z)) - 4 L(p|_{L(xL(y))}L(z)) \\
&\ + L^2(p|_{L^2(x)y}) z +p|_{L^2(x)y} L^2(z) + 2 L(p|_{L^2(x)y})L( z) - 2 L( L(p|_{L^2(x)y})z) - 2 L(p|_{L^2(x)y}L( z))\\
&\ + L^2(p|_{xL^2(y)}) z +p|_{xL^2(y)} L^2(z) + 2 L(p|_{xL^2(y)})L( z) - 2 L(L(p|_{xL^2(y)}) z)- 2 L(p|_{xL^2(y)}L( z))\\
&\ +2L^2(p|_{L(x)L(y)}) z + 2p|_{L(x)L(y)} L^2(z) + 4 L(p|_{L(x)L(y)})L( z) - 4 L(L(p|_{L(x)L(y)}) z) - 4 L(p|_{L(x)L(y)}L( z))\\
&\ -2 L^2(p|_{L(L(x)y)}) z - 2p|_{L(L(x)y)} L^2(z) - 4 L(p|_{L(L(x)y)})L( z) + 4 L( L(p|_{L(L(x)y)})z) + 4 L(p|_{L(L(x)y)}L( z))\\
&\ -2L^2(p|_{L(xL(y))}) z - 2p|_{L(xL(y))} L^2(z) - 4 L(p|_{L(xL(y))})L( z) + 4 L(L(p|_{L(xL(y))}) z) + 4L(p|_{L(xL(y))}L( z))\\
&\equiv\ 0.
\end{align*}

\noindent{\bf Case 2.} The ambiguity is of the form:
\begin{align*}
w =&\ \lbar{f} = L^2(x p|_{L^2(yz)}) = q|_{\lbar{g}}, \text{ where } x,y,z\in \frakM(Z), p,q\in \frakM^\star(Z), q= L^2(xp)\text{ and }\\
f =&\ L^2(x p|_{L^2(yz)}) + L^2(x) p|_{L^2(yz)} +x L^2(p|_{L^2(yz)}) + 2 L(x)L( p|_{L^2(yz)}) - 2 L(L(x) p|_{L^2(yz)}) -2 L(xL( p|_{L^2(yz)})), \\
g=&\ L^2(yz) + L^2(y)z + yL^2(z) + 2L(y)L(z) - 2 L(L(y)z) - 2L(yL(z)).
\end{align*}
The corresponding including composition is trivial mod $(S, w)$:
\begin{align*}
f - q|_g =&\ L^2(x) p|_{L^2(yz)} + x L^2(p|_{L^2(yz)}) + 2 L(x )L( p|_{L^2(yz)}) - 2 L(L(x ) p|_{L^2(yz)}) - 2 L(xL( p|_{L^2(yz)}))\\
&\ - L^2(x p|_{L^2(y)z}) - L^2(x p|_{y L^2(z)}) - 2 L^2(x p|_{L(y)L(z)}) + 2 L^2(x p|_{L(L(y)z)}) + 2 L^2(x p|_{L(yL(z))})\\
\equiv&\ -L^2(x) p|_{L^2(y)z} -  L^2(x) p|_{y L^2(z)} - 2 L^2(x) p|_{L(y)L(z)} + 2 L^2(x) p|_{L(L(y)z)} + 2L^2(x) p|_{L(yL(z))}\\
&\ - x L^2(p|_{L^2(y)z})- x L^2(p|_{y L^2(z)}) - 2 x L^2(p|_{L(y)L(z)}) + 2 x L^2(p|_{L(L(y)z)}) + 2 x L^2(p|_{L(yL(z))})\\
&\ - 2 L(x )L( p|_{L^2(y)z}) - 2L(x )L( p|_{y L^2(z)}) - 4 L(x )L( p|_{L(y)L(z)}) + 4 L(x )L( p|_{L(L(y)z)}) + 4 L(x )L( p|_{L(yL(z))})\\
&\ + 2L(L(x ) p|_{L^2(y) z}) + 2L(L(x ) p|_{y L^2(z)}) + 4 L(L(x ) p|_{L(y)L(z)}) - 4 L(L(x ) p|_{L(L(y)z)}) - 4 L(L(x ) p|_{L(yL(z))})\\
&\ + 2 L(xL( p|_{L^2(y)z})) + 2 L(xL( p|_{y L^2(z)})) + 4 L(xL( p|_{L(y)L(z)})) - 4 L(xL( p|_{L(L(y)z)})) - 4 L(xL( p|_{L(yL(z))}))\\
&\ +L^2(x) p|_{L^2(y)z} + x L^2(p|_{L^2(y)z}) + 2 L(x)L( p|_{L^2(y)z}) - 2 L(L(x) p|_{L^2(y)z}) -  2L(xL( p|_{L^2(y)z}))\\
&\ + L^2(x) p|_{y L^2(z)} +x L^2(p|_{y L^2(z)}) + 2L(x)L( p|_{y L^2(z)}) - 2 L(L(x) p|_{y L^2(z)}) - 2 L(xL( p|_{y L^2(z)}))\\
&\ + 2 L^2(x) p|_{L(y)L(z)} + 2x L^2(p|_{L(y)L(z)}) + 4L(x)L( p|_{L(y)L(z)}) - 4 L(L(x) p|_{L(y)L(z)}) - 4 L(xL( p|_{L(y)L(z)}))\\
&\ - 2 L^2(x) p|_{L(L(y)z)} - 2x L^2(p|_{L(L(y)z)}) - 4 L(x)L( p|_{L(L(y)z)}) + 4 L(L(x) p|_{L(L(y)z)}) + 4 L(xL( p|_{L(L(y)z)}))\\
&\ -2 L^2(x) p|_{L(yL(z))} - 2x L^2(p|_{L(yL(z))}) - 4 L(x)L( p|_{L(yL(z))}) + 4 L(L(x) p|_{L(yL(z))}) + 4L(xL( p|_{L(yL(z))}))\\
\equiv&\ 0.
\end{align*}
This completes the proof.
\end{proof}

\begin{remark}
\begin{enumerate}
\item If we involve the unity $\bfone$, then the leading monomial of the OPI
$$L^2(x_1x_2) + L^2(x_1)x_2 + x_1L^2(x_2) + 2L(x_1)L(x_2) - 2 L(L(x_1)x_2) - 2L(x_1L(x_2))$$
in Eq.~(\mref{eq:C}) is not necessary $L^2(x_1x_2)$ with respect to the order $\leq_{{\rm dt}}$. For example, taking
$x_1 = \bfone$, then the above OPI in Eq.~(\mref{eq:C}) is $$L^2(\bfone)x_2 + 2L(\bfone)L(x_2) - 2L(L(\bfone)x_2),$$
whose leading monomial is not $L^2(x_1x_2)=L^2(x_2)$.

\item In Proposition~\mref{pp:5}, if we apply the monomial orders $\leqo$ or $\ordqc$, then the leading monomial of
$$ L^2(x y) + L^2(x)y + xL^2(y) + 2L(x)L(y) - 2 L(L(x)y) - 2L(xL(y))$$ is $L(x)L(y)$.
It induces a rewriting rule
$$L(x)L(y) \rightarrow -\frac{1}{2} L^2(x y) -\frac{1}{2} L^2(x)y -\frac{1}{2} xL^2(y) + L(L(x)y) + L(xL(y)).$$
Taking $y$ to be $L(y)$, we obtain an infinite rewriting process:
\begin{align*}
L(x)L^2(y) \rightarrow&\ -\frac{1}{2} L^2(x L(y)) -\frac{1}{2} L^2(x)L(y) -\frac{1}{2} xL^3(y) + L(L(x)L(y)) + L(x L^2(y))\\
 \rightarrow&\ -\frac{1}{2} L^2(x L(y)) + \frac{1}{4} L^2(L(x)y) + \frac{1}{4} L^3(x)y + \frac{1}{4} L(x)L^2(y)
 -\frac{1}{2}L(L^2(x)y) \\
 &\ - \frac{1}{2}L(L(x)L(y)) -\frac{1}{2} xL^3(y) + L(L(x)L(y)) + L(x L^2(y)),\\
\rightarrow&\ \cdots.
\end{align*}
Notice that the term $L(x)L^2(y)$ appears again in the right hand side.
\end{enumerate}
\mlabel{re:two}
\end{remark}

The following result is needed.

\begin{prop}
Let $X$ and $Z$ be sets.
The following OPIs in $\bfk\frakM(X)$ are respectively Gr\"{o}bner-Shirshov on $Z$ with respect to the monomial orders $\leqo$ or $\ordqc$:
\begin{align}
&L(L(x_1)x_2) - L^2(x_1)x_2 \quad \quad \text{($P_1$)} ,  \mlabel{eq:opi12}\\
& L(x_1L(x_2)) - x_1L^2(x_2) \quad \quad \text{($P_3$)} , \mlabel{eq:opi13}
\end{align}
where $x_1, x_2\in X$.
\mlabel{pp:6}
\end{prop}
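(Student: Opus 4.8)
The plan is to mimic the proofs of Propositions~\mref{pp:1}--\mref{pp:5}, taking advantage of the left--right symmetry $L(L(x)y)\leftrightarrow L(xL(y))$ that carries $P_1$ to $P_3$, so that I would prove the claim for $P_1$ and deduce $P_3$ by the mirror argument. First I form the substitution set
$$S=\{\,L(L(x)y)-L^2(x)y\mid x,y\in\frakM(Z)\,\},$$
and record that, with respect to both $\leqo$ and $\ordqc$, the leading monomial of $L(L(x)y)-L^2(x)y$ is $L(L(x)y)$, the summand in which the operator $L$ wraps the product $L(x)\cdot y$. Since this leading monomial is a single bracketed word it has breadth~$1$, so the inequalities $\max\{|\lbar{f}|,|\lbar{g}|\}<|w|<|\lbar{f}|+|\lbar{g}|$ required for an intersection composition can never hold; hence only including compositions need to be examined.

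Next I would enumerate those including compositions, i.e. the ways a copy of $\lbar{g}=L(L(x)y)$ sits inside another leading monomial $\lbar{f}=L(L(X)Y)$. Reading off the product $L(x)\cdot y$ inside the outer bracket, there are three such overlaps: embedding into the trailing factor, $Y=p|_{\lbar{g}}$, giving $w=L(L(X)\,p|_{\lbar{g}})$; embedding into the argument of the leading factor, $X=p|_{\lbar{g}}$, giving $w=L(L(p|_{\lbar{g}})Y)$; and the self-nesting overlap in which $\lbar{g}$ coincides with the inner operator term $L(X)$, forcing $X=L(x)y$ and $w=L(L(L(x)y)\,Y)$. For each I would form $f-q|_g$ with the matching $q\in\frakM^{\star}(Z)$ and reduce every monomial by the rule $L(L(a)b)\to L^2(a)b$; the resulting terms should pair off and cancel, so that $f-q|_g\equiv0\mod(S,w)$ in all three cases, whence $S$ is a Gr\"{o}bner--Shirshov basis.

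The main obstacle is that, unlike the leading monomials of type $L^2(xy)$ in Propositions~\mref{pp:1}--\mref{pp:5}, here the head of the product $L(x)\cdot y$ is itself an operator term $L(x)$; this is precisely what produces the two extra ``left-factor'' overlaps (the second and third cases above), which have no analogue in the earlier propositions. In these cases a single application of the rule does not immediately expose matching monomials: one must rewrite the interior subword $L(L(x)y)$ first and then rewrite the newly exposed term $L(L(L(x))y)$, iterating the rule until both sides collapse to a common normal form of the schematic shape $L^k(x)\,y\,Y$. I would therefore devote the bulk of the argument to verifying that these self-nesting compositions genuinely close under repeated rewriting, and to confirming that the three overlaps above are exhaustive; the clean trailing-factor case reduces in one step exactly as before, and once all three are settled the symmetric statement for $P_3$ follows verbatim.
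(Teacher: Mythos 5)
Your proposal matches the paper's proof essentially verbatim: the same substitution set $S$, the same leading monomial $L(L(x)y)$, the breadth-one argument ruling out intersection compositions, the same three including compositions (embedding into the inner argument, embedding into the trailing factor, and the self-nesting overlap $w=L(L(L(x)y)z)$ with $q=L(\star z)$), and the same symmetry reduction of $P_3$ to $P_1$. One small point: in the paper only the self-nesting case needs the iterated rewriting you describe (both terms collapse to $L^3(x)yz$ after two applications of the rule), while the embedding into the inner argument $L(L(p|_{\lbar{g}})z)$ cancels in a single step exactly as in Propositions~\mref{pp:1}--\mref{pp:5}, so you are slightly overcautious there.
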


\begin{proof}
By symmetry, it suffices to prove the case of Eq.~(\mref{eq:opi12}).
Let $$S = \{ L(L(x)y) - L^2(x)y \mid x,y\in \frakM(Z)\}.$$
The leading monomial of $L(L(x)y) - L^2(x)y$ is $L(L(x)y)$ with respect to $\leqo$ or $\ordqc$.
There are no intersection compositions, and there are three including compositions.

\noindent{\bf Case 1.} The ambiguity is of the form
\begin{align*}
w =&\ \lbar{f} = L(L(p|_{L(L(x)y)}) z) = q|_{\lbar{g}}, \text{ where } x,y,z\in \frakM(Z), p,q\in \frakM^\star(Z), q= L(L(p)z)\text{ and }\\
f =&\  L(L(p|_{L(L(x)y)}) z) - L^2(p|_{L(L(x)y)}) z, \\
g=&\ L(L(x)y) - L^2(x)y.
\end{align*}
Then the corresponding including composition is trivial mod $(S, w)$:
\begin{align*}
f-q|_g =&\ -L^2(p|_{L(L(x)y)}) z + L(L(p|_{L^2(x)y}) z) \\
\equiv&\ -L^2(p|_{L^2(x)y}) z + L^2(p|_{L^2(x)y}) z\\
\equiv&\ 0.
\end{align*}

\noindent{\bf Case 2.} The ambiguity is of the form
\begin{align*}
w =&\ \lbar{f} = L(L(x) p|_{L(L(y)z)}) = q|_{\lbar{g}}, \text{ where } x,y,z\in \frakM(Z), p,q\in \frakM^\star(Z), q= L(L(x)p)\text{ and }\\
f =&\  L(L(x) p|_{L(L(y)z)}) -  L^2(x) p|_{L(L(y)z)} , \\
g=&\ L(L(y)z) - L^2(y)z.
\end{align*}
The corresponding including composition is trivial mod $(S, w)$:
\begin{align*}
f-q|_g =&\ -  L^2(x) p|_{L(L(y)z)} + L(L(x) p|_{L^2(y)z}) \\
\equiv&\ -L^2(x) p|_{L^2(y)z} + L^2(x) p|_{L^2(y)z}  \\
\equiv&\ 0.
\end{align*}

\noindent{\bf Case 3.} The ambiguity is of the form
\begin{align*}
w =&\ \lbar{f} = L(L(L(x)y)z) = q|_{\lbar{g}}, \text{ where } x,y,z\in \frakM(Z), q= L(\star z)\in \frakM^\star(Z) \text{ and }\\
f =&\  L(L(L(x)y)z) - L^2(L(x)y)z , \\
g=&\ L(L(x)y) - L^2(x)y,
\end{align*}
whose corresponding including composition is trivial mod $(S, w)$:
\begin{align*}
f-q|_g =&\ -  L^2(L(x)y)z + L(L^2(x)yz) \\
\equiv&\ -L(L^2(x)y)z +L^3(x)yz\\
\equiv&\ -L^3(x)yz +L^3(x)yz\\
\equiv&\ 0.
\end{align*}
This completes the proof.
\end{proof}

In summary, we conclude the second main result of this paper.

\begin{theorem}
Let $X$ and $Z$ be sets.
All OPIs in $\bfk\frakM(X)$ of degree 2 and multiplicity 2 classified in~\cite{BE} listed in Lemma~\mref{lem:two} are respectively Gr\"{o}bner-Shirshov on $Z$ with respect to the monomial orders $\leqo$, $\ordqc$ or $\leq_{{\rm dt}}$.
\mlabel{thm:2}
\end{theorem}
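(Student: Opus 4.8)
The plan is to partition the OPIs listed in Lemma~\mref{lem:two} into four structural families and to dispatch each family by the cheapest argument available, performing a genuine composition computation only where one has not already been carried out. The families are: the three second-order differential identities of Eqs.~\meqref{eq:opi1}--\meqref{eq:opi3}; the six single-term (monomial) identities $L^2(x_1x_2)$, $L^2(x_1)x_2$, $x_1L^2(x_2)$, $P_2$, $P_4$, $P_5$; the remaining ``new'' identities A, B, C together with $P_1$ and $P_3$; and the four Rota--Baxter type identities (Rota--Baxter, Nijenhuis, inverse average, average). Because the theorem only claims that each OPI is Gr\"obner--Shirshov with respect to \emph{some} order among $\leqo$, $\ordqc$, $\leq_{{\rm dt}}$, the entire task reduces to assigning to each OPI an order under which its substitution set has only trivial compositions, and the proof is therefore mostly an assembly of the preceding propositions together with two gap-filling observations.

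First I would dispose of the six monomial identities by a single triviality argument valid for \emph{any} monomial order. If $\phi$ is a bracketed monomial, then every $s\in S_{\{\phi\}}(Z)$ is monic with $\lbar{s}=s$, so for each pair $f,g$ the intersection composition $(f,g)^{u,v}_w=fu-vg=w-w=0$ and the including composition $(f,g)^q_w=f-q\suba{g}=w-w=0$ vanish identically; the zero element is trivially trivial modulo $(S,w)$, so $S_{\{\phi\}}(Z)$ is a Gr\"obner--Shirshov basis. This settles $L^2(x_1x_2)$, $L^2(x_1)x_2$, $x_1L^2(x_2)$, $P_2$, $P_4$, and $P_5$ simultaneously and under all three orders.

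Next I would collect the propositions already proved: Eqs.~\meqref{eq:opi1}--\meqref{eq:opi3} are handled by Proposition~\mref{pp:1}, New identity A by Proposition~\mref{pp:3}, New identity B by Proposition~\mref{pp:4}, New identity C by Proposition~\mref{pp:5}, and $P_1$, $P_3$ by Proposition~\mref{pp:6}. For the four remaining nonmonomial identities I would observe that under $\leqo$ the leading monomial is $L(x_1)L(x_2)$, the term of largest $L$-breadth $\brep$; thus Rota--Baxter, Nijenhuis, inverse average, and average are exactly the weight-zero Rota--Baxter type OPIs whose Gr\"obner--Shirshov property with respect to $\leqo$ is the content of the classification in~\mcite{ZGGS}, which I would invoke directly rather than recompute.

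The main point requiring care is not a single hard computation but the bookkeeping of which order accompanies which identity, since no uniform order works: as Remark~\mref{re:two} illustrates, New identity C is established only with respect to $\leq_{{\rm dt}}$ (under $\leqo$ or $\ordqc$ its leading monomial becomes $L(x_1)L(x_2)$ and one is led to the nonterminating rewriting displayed there), whereas the Rota--Baxter family requires $\leqo$ and the remaining families work under $\leqo$ or $\ordqc$. The genuine computational weight of the whole theorem lives inside Proposition~\mref{pp:5}, whose including compositions for New identity C are by far the most voluminous; at the level of the theorem itself the only real content beyond citation is the monomial triviality lemma and the recognition of the four product-leading identities as Rota--Baxter type. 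I would therefore present the proof as: verify the monomial case directly, cite Propositions~\mref{pp:1}, \mref{pp:3}, \mref{pp:4}, \mref{pp:5}, \mref{pp:6} for the structured identities, cite~\mcite{ZGGS} for the Rota--Baxter type, and record in each case the order under which the claim holds.
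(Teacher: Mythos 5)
Your proposal is correct and takes essentially the same approach as the paper: the paper's proof likewise cites \mcite{ZGGS} for the four Rota--Baxter type OPIs under $\leqo$, dismisses the monomial OPIs as trivially Gr\"obner--Shirshov under all three orders, and refers the rest to Propositions~\mref{pp:1}, \mref{pp:3}, \mref{pp:4}, \mref{pp:5} and~\mref{pp:6}. Your only addition is spelling out why the monomial case is trivial (both compositions reduce to $w-w=0$), which the paper merely asserts.
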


\begin{proof}
The Rota-Baxter OPI, Nijenhuis OPI, average OPI and inverse average OPI are Rota-Baxter type OPIs, which are respectively Gr\"{o}bner-Shirshov on $Z$ with respect to the monomial order $\leqo$~\cite{ZGGS}. Further the monomial OPIs are respectively Gr\"{o}bner-Shirshov on $Z$ with respect to the monomial orders $\leqo$, $\ordqc$ and $\leq_{{\rm dt}}$. Finally, the remainder follows from Propositions~\mref{pp:1}, \mref{pp:3}, \mref{pp:4}, \mref{pp:5} and~\mref{pp:6}.
\end{proof}

\smallskip
\noindent
{\bf Acknowledgments.}
This work is supported by the National Natural Science Foundation of
China (Grant No. 12071191), the Natural Science Foundation of Gansu
Province (Grant No. 20JR5RA249) and the Natural Science Foundation of Shandong Province
(ZR2020MA002).

\end{document}